
%

\documentclass[11pt,a4paper,reqno]{amsart}
\usepackage[margin=1in]{geometry}
\usepackage{xcolor}
\geometry{top=1.5in, bottom=1.8in}
\setlength{\parskip}{5pt} 


\usepackage{amsmath}
\usepackage{amsaddr,amsthm,amssymb}
\usepackage{mathrsfs,mathtools}
\usepackage{graphicx}
\usepackage{subcaption}
\usepackage{float}
\usepackage{amsthm}
\usepackage{verbatim}
\usepackage{booktabs}
\usepackage[titletoc]{appendix}
\usepackage{etoolbox}
\usepackage{setspace}
\usepackage{threeparttable}
\usepackage{thmtools}
\usepackage{thm-restate}
\usepackage[numbers]{natbib}

\newtheorem{theorem}{Theorem}
\newtheorem{lemma}{Lemma}
\newtheorem{corollary}{Corollary}
\theoremstyle{definition}

\newtheorem*{assume}{Assumption}

\theoremstyle{remark}
\newtheorem*{remark}{Remark}

\numberwithin{equation}{section}

\makeatletter
\def\blfootnote{\gdef\@thefnmark{}\@footnotetext}
\makeatother


\DeclareMathOperator*{\argmin}{arg\,min}

\makeatletter
\newcommand{\superimpose}[2]{%
  {\ooalign{$#1\@firstoftwo#2$\cr\hfil$#1\@secondoftwo#2$\hfil\cr}}}
\makeatother
\newcommand{\vw}{\mathpalette\superimpose{{\vee}{\wedge}}}

\sloppy
\allowdisplaybreaks



\makeatletter
\setcounter{tocdepth}{2}
\renewcommand{\tocsection}[3]{%
	\indentlabel{\@ifnotempty{#2}{\bfseries\ignorespaces#1 #2\quad}}\bfseries#3}
\renewcommand{\tocsubsection}[3]{%
	\indentlabel{\@ifnotempty{#2}{\ignorespaces#1 #2\quad}}#3}
\newcommand\@dotsep{4.5}
\def\@tocline#1#2#3#4#5#6#7{\relax
	\ifnum #1>\c@tocdepth 
	\else
	\par \addpenalty\@secpenalty\addvspace{#2}%
	\begingroup \hyphenpenalty\@M
	\@ifempty{#4}{%
		\@tempdima\csname r@tocindent\number#1\endcsname\relax
	}{%
		\@tempdima#4\relax
	}%
	\parindent\z@ \leftskip#3\relax \advance\leftskip\@tempdima\relax
	\rightskip\@pnumwidth plus1em \parfillskip-\@pnumwidth
	#5\leavevmode\hskip-\@tempdima{#6}\nobreak
	\leaders\hbox{$\m@th\mkern \@dotsep mu\hbox{.}\mkern \@dotsep mu$}\hfill
	\nobreak
	\hbox to\@pnumwidth{\@tocpagenum{\ifnum#1=1\bfseries\fi#7}}\par
	\nobreak
	\endgroup
	\fi}
\AtBeginDocument{%
	\expandafter\renewcommand\csname r@tocindent0\endcsname{0pt}
}
\def\l@subsection{\@tocline{2}{0pt}{2.5pc}{5pc}{}}
\makeatother

\makeatletter
\renewcommand\section{\@startsection {section}{1}{\z@}%
	{-3.5ex \@plus -1ex \@minus -.2ex}%
	{2.3ex \@plus.2ex}%
	{\normalfont\@secnumfont\fontsize{15}{18}\bfseries}}
\renewcommand\subsection{\@startsection{subsection}{2}{\z@}%
	{-3.25ex\@plus -1ex \@minus -.2ex}%
	{1.5ex \@plus .2ex}%
	{\normalfont\fontsize{13.5}{17}\selectfont}}
\renewcommand\subsubsection{\@startsection{subsubsection}{3}{\z@}%
	{-3.25ex\@plus -1ex \@minus -.2ex}%
	{1.5ex \@plus .2ex}%
	{\normalfont\normalsize\fontsize{12.5}{17}\selectfont}}

\def\@seccntformat#1{%
	\protect\textup{\protect\@secnumfont
		\ifnum\pdfstrcmp{section}{#1}=0 \bfseries\fi
		\csname the#1\endcsname
		\protect\@secnumpunct
	}%
}  
\makeatother

\let\origmaketitle\maketitle
\def\maketitle{
	\begingroup
	\def\uppercasenonmath##1{} 
	\let\MakeUppercase\relax 
	\origmaketitle
	\endgroup
}


%
%

\newcommand{\cF}{{\mathcal F}}

\newcommand{\cH}{{\mathcal H}}

\newcommand{\cN}{{\mathcal N}}

\newcommand{\cR}{{\mathcal R}}

\newcommand{\cT}{{\mathcal T}}

\newcommand{\cY}{{\mathcal Y}}

%

%
%

%
%
%

\newcommand{\bC}{\mathbb C}

\newcommand{\bL}{{\mathbb L}}

\newcommand{\bN}{{\mathbb N}}

\newcommand{\bR}{{\mathbb R}}

%
%

%

%
%
%

\newcommand{\sF}{{\mathscr F}}

\newcommand{\sH}{{\mathscr H}}

\newcommand{\sN}{{\mathscr N}}

\newcommand{\sW}{{\mathscr W}}

%
%
%

\newcommand{\rE}{{\mathrm E}}

\newcommand{\rP}{{\mathrm P}}

\newcommand{\rd}{{\mathrm d}}
\renewcommand{\kappa}{\varkappa}

\pagestyle{plain}

\title{\Large{Density Deconvolution with   Non--Standard Error Distributions: Rates of Convergence and 
Adaptive Estimation$^\ast$ 
\footnote{$^{\ast}$The research was supported by the Israel Science Foundation (ISF) research grant.}
}}

\author{Alexander Goldenshluger
\ \  \& \ \ Taeho Kim
}
\address{Department of Statistics\\ University of Haifa\\
Haifa 3498838,
Israel}
\curraddr{}
\email{goldensh@stat.haifa.ac.il; ktaeho@campus.haifa.ac.il}
\thanks{}

\keywords{}

\dedicatory{}

\begin{document}

\maketitle



\begin{abstract}
It is a 
standard assumption in the density deconvolution problem that 
the characteristic function of the measurement error distribution is non-zero on the real line. 
While this condition is assumed in the majority of existing works on the topic, 
there are many problem instances of interest  where it is violated. 
In this paper we focus on non--standard settings where the characteristic function of the measurement errors has
zeros, and study how zeros multiplicity affects the estimation accuracy. 
For a prototypical problem of this type we 
demonstrate that the best achievable estimation accuracy is determined by the multiplicity of zeros, the rate of decay of the  error characteristic function, as well as  by the smoothness and the tail behavior 
of the estimated density. 
We 
derive lower bounds on the minimax risk and 
develop optimal in the minimax sense estimators. In addition, we consider the problem 
of adaptive estimation and propose a data--driven  estimator that automatically adapts to 
unknown smoothness and tail behavior of the density to be estimated. \\
\vspace{3pt}
\\

\noindent\textbf{\textit{2010 AMS subject classification:}} 62G07, 62G20
\vspace{2pt}

\noindent\textbf{\textit{Keywords and phrases:}} Density Deconvolution, Minimax Risk, Characteristic Function, Laplace Transform, Non-standard Measurement Error, Zero Multiplicity
\end{abstract}
\newpage

\begin{spacing}{0.01}
\tableofcontents
\end{spacing}

\onehalfspacing

\section{Introduction}

Density deconvolution is a problem of estimating a  probability density from observations  
with  additive measurement errors. 
Specifically, assume that we observe random sample $Y_1, \ldots, Y_n$ generated by the model
\begin{align*}
Y_i&=X_i+\epsilon_i,\;\;
\;\;i=1,2,\ldots,n,	
\end{align*}
 where $X_i$'s are i.i.d.\ random variables with unknown density $f$  with respect to the Lebesgue measure
 on $\mathbb{R}$, 
 $\epsilon_i$'s are i.i.d.\ measurement errors with distribution function $G$, and $X_i$'s are independent of
 $\epsilon_i$'s. The objective is to estimate $f$ on the basis of the sample 
$\cY_n:=\{Y_1,\ldots, Y_n\}$.
 Since  $Y_i$ is the sum of two independent random variables, $X_i$ and $\epsilon_i$, 
  density $f_Y$ of $Y_i$ is given by 
the  convolution 
\begin{align}\label{MOD_2}
f_Y(y)=(f\star \rd G)(y)=\int^\infty_{-\infty} f(y-x) \rd G(x).
\end{align}
\par 
An estimator 
of the value of $f(x_0)$ is a measurable function of $\cY_n$,  $\hat{f}(x_0)=\hat{f}(x_0; \cY_n)$,
and 
 the risk of $\hat{f}(x_0)$ is 
\[
 \cR_{n}[\hat{f}, f] := \Big[\rE_f|\hat{f}(x_0)-f(x_0)|^2\Big]^{1/2},
\]
where $\rE_f$ stands for the expectation with respect to the probability measure $\rP_f$ generated by  the
observation $\cY_n$ when the unknown density of $X_i$'s is~$f$. For a particular functional class $\sF$, 
accuracy of $\hat{f}(x_0)$ is measured  by 
the {\em maximal risk} 
\[
 \cR_{n}[\hat{f}; \sF]:=\sup_{f\in \sF} \cR_{n}[\hat{f}, f],
\]
and an estimator $\hat{f}_*(x_0)$ is called {\em rate--optimal} or {\em optimal in order} on $\sF$ if 
\[
 \cR_{n}[\hat{f}_*; \sF]\asymp \cR_{n}^* [\sF]:=\inf_{\hat{f}}\cR_{n}[\hat{f}; \sF], \;\;\;n\to\infty.
\]
Here $\cR_{n}^*[\sF]$ is the {\em minimax risk}, and the infimum in its definition  is taken over all possible
estimators of $f(x_0)$. The objective 
in the density deconvolution problem is to construct an optimal in order estimator, and to study the rate
at which the minimax risk $\cR_{n}^*[\sF]$ converges to zero as $n\to\infty$.
In what follows we refer to the latter as {\em the minimax rate of convergence}.
\par 
The outlined problem is a subject of vast literature under various assumptions on the functional class~$\sF$ and 
distribution of measurement errors~$G$; see, e.g., 
 \citet{CH:1988}, \citet{SC:1990}, 
\citet{Zhang:90}, \citet{Fa:1991}, \citet{Butucea-Tsybakov-1, Butucea-Tsybakov-2}, 
\citet{Meister:09}, \citet{Nickl} 
for representative publications, where
further references can be found.
Typically $\sF$ is a class of functions satisfying 
smoothness conditions (e.g.,  H\"older or Sobolev functional classes).  
As for assumptions on the measurement error distribution, they are usually put in terms of
the characteristic function of $G$ and read as follows.
 \begin{assume}[\textbf{E0}]\label{ASM_E0} Let 
 $\phi_g(i\omega):=\mathcal{F}[\rd G,\omega]:=\int^\infty_{-\infty} e^{-i\omega x}\rd G(x)$ be the characteristic 
 function (the Fourier transform) of the measurement error distribution $G$. Then,
 \begin{itemize}
\item[I.] $|\phi_g(i\omega)|\ne 0$ for all $\omega\in\mathbb{R}$.
\item[II.] $|\phi_g(i\omega)|$ decreases at  polynomial or exponential rate as $|\omega|\to\infty$:
\\
 {\em ordinary smooth errors:} $|\phi_g(i\omega)|\asymp |\omega|^{-\gamma}$, $|\omega|\to \infty$ for some $\gamma>0$, or
\\
 {\em super-smooth errors:} $|\phi_g(i\omega)|\asymp \exp\{-c|\omega|^{\gamma}\}$, $|\omega|\to\infty$ for some $c>0$ and $\gamma>0$.
\end{itemize}
\end{assume}
Assumption~(E0) is inarguably conventional and presumed in nearly all 
works dealing with density deconvolution problems.
Under Assumption~(E0) accuracy in estimating $f$ is detemined by the rate at which 
$\phi_g$ tends to zero and by smoothness of $f$ as characterized in terms of functional class $\sF$. 
Condition~(E0-I) ensures 
that the statistical model is identifiable (it is well known that if $\phi_g$ vanishes on a set of non--zero Lebesgue measure
then $f$ is not identifiable). It   underlies applicability 
of the standard Fourier--transform--based techniques for constructing estimators
of~$f$. Note
however that (E0-I) does not hold 
if $\phi_g$ has isolated zeros  which is the case  
in many interesting situations, e.g., 
for continuous 
distributions  with compactly supported densities or for
general discrete distributions. 
For example, if $G$ is a uniform distribution on $[-1,1]$ 
then $\phi_g(i\omega)=\sin \omega/\omega$  has zeros at $\omega=\pm \pi k$, $k\in \bN$, and 
(E0-I) is not fulfilled. 
\par 
The settings in which the error characteristic function $\phi_g$ 
may have isolated zeros have been studied to a considerably  
lesser extent;  the available
results in this area are  fragmentary and disparate.  
\citet{De:1989} pointed out that 
 density~$f$ can be estimated consistently in the $\bL_1$--norm when  
the characteristic function $\phi_g$ of 
the error distribution is non-zero almost everywhere.
Although it is a quite general result, the convergence is not uniform, and 
the evaluation procedure is not based on the minimax criterion. 
Several previous studies investigated the  problem with the uniform error distribution.
In particular, \citet{GJ:2003} and \citet{Fe:2008} demonstrate that zeros of the characteristic 
function $\phi_g$ 
do not have influence on the 
minimax rate of convergence: it remains the same as under condition~(E0-I) when  
the estimated density $f$
is supported on the positive real line \cite{GJ:2003}, or has bounded second moment~\cite{Fe:2008}.
Considering a more general class of so-called
Fourier--oscillating error distributions, \citet{DM:2011} derive a similar result
for densities $f$ having finite left endpoint. 
In contrast to the aforementioned results, \citet{HM:2007} demonstrate  that 
for the class of Fourier--oscillating error distributions 
 zeros of the error characteristic function lead 
to a slower minimax convergence rate  than the one  under condition~(E0-I).
\citet{HM:2007} suggest 
a ``ridge'' modification of the kernel density deconvolution estimator in which 
characteristic function of the error distribution is regularized to avoid singularities  
due to the zeros.
For another closely related work we also refer to \citet{Me:2007}.
\par 
Recently a principled  method for  solving density deconvolution problems  under general assumptions 
on the 
error characteristic function has been proposed in \citet{BG:2019}.
This method uses  the Laplace transform (the Fourier transform in complex domain) in conjunction with the linear functional
strategy for constructing rate--optimal kernel deconvolution estimators. The results show 
that zeros of the error characteristic function have no influence on 
the achievable estimation accuracy  when, in addition  to usual smoothness conditions, the estimated density 
$f$ has  sufficiently light tails.  On the other hand, if $f$ is heavy tailed then zeros of the error characteristic function 
{\em affect} 
the minimax rates of convergence that become slower. 
\citet{BG:2019} provide an explicit condition on the tail behavior of $f$ and zeros geometry of $\phi_g$
under which the minimax rates of convergence are not influenced by the zeros of~$\phi_g$. 
\par 
In this paper we focus   on the setting when $\phi_g$ has zeros, and 
$f$ is heavy tailed relative to  the multiplicity $m$ of zeros of $\phi_g$ on the imaginary axis.
The prototypical settings of this type arise when mesurement error distribution is  the binomial
distribution 
${\rm Bin}(m, 1/2)$  or  the $m$--fold convolution of 
uniform distributions on $[-\theta, \theta]$.
Utilizing the methodology proposed in \cite{BG:2019} we develop rate--optimal estimators of $f$
and investigate their properties. 
It is shown that, 
in contrast to the well known results under Assumption~(E0), in the considered regime 
the minimax rate of convergence is determined 
not only by the smoothness of $f$ and the rate at which $\phi_g$ tends to zero, but also 
by the tail behavior of $f$ and 
the zero multiplicity of $\phi_g$.  
The derived lower bounds on the minimax risk demonstrate that dependence of the estimation accuracy on 
these factors  is essential.
\par 
The construction of the proposed rate--optimal estimator of $f$ depends on tuning
parameters, and their specification requires prior information on smoothness  and tail behavior of $f$.  
In practice such information is rarely available. To overcome this difficulty we propose and study 
an adaptive estimator  of $f$ that is based on the methodology developed in   
\citet{GL:2011,GL:2014}.
An interesting feature of the proposed estimator is that it involves two tuning parameters, and 
the adaptation here is not only with respect to the unknown smoothness, 
but also with respect to the unknown tail behavior of $f$. We derive an oracle inequality for the developed adaptive estimator 
and show that it achieves the minimax rate of convergence up to a logarithmic factor which is unavoidable 
payment for adaptation in 
point-wise estimation. 
\par 
The rest of the paper is organized as follows. 
In Section~\ref{sec:est-construction}  we present the general idea for estimator construction and introduce our estimator.
 Section~\ref{sec:minimax}  deals with minimax estimation of $f(x_0)$ with respect to proper 
 functional classes. In Section~\ref{sec:adaptive} we introduce the corresponding adaptive procedure and 
 investigate its properties. 
 Lastly, Section~\ref{sec:conclusion}  is reserved for discussion and concluding remarks. 
All the proofs are deferred to~Appendix.

 \section{Estimator Construction}\label{sec:est-construction}
\subsection{Idea of Construction}\label{EST_IDEA}
We start 
with presenting the key idea for estimator construction in our density 
deconvolution problem. 
The construction  uses Laplace transform (Fourier transform in the complex domain) which 
allows us to handle the situation where the first condition of Assumption~(E0) is not satisfied.
Our goal is to deliver the main idea of construction; for further details we refer to~\citet{BG:2019}. 
\par 
The following definitions will be utilized 
throughout the study. 
For a generic function $w$ the bilateral Laplace transform of $w$ 
is defined  to be  
\begin{align}\label{EQN_2.1}
	\mathcal{L}[w;z]:=\phi_w(z)=\int^\infty_{-\infty} w(x) e^{-zx}\rd x.
\end{align}
The integral convergence region $\Sigma_w$ (if exists) is a vertical strip  in the complex plane,  
$\Sigma_w=\{z\in \bC: Re(z)\in (\sigma_w^-, \sigma_w^+)\}$ for some $\sigma_w^-, \sigma_w^+\in \bR$, and 
$\phi_w(z)$ is analytic in $\Sigma_w$.
The inverse Laplace transform is 
\[
 w(x)=\frac{1}{2\pi i}\int_{s-i\infty}^{s+i\infty} \phi_w(z) e^{zx} \rd z= \frac{1}{2\pi}
 \int_{-\infty}^\infty \phi_w(s+i\omega) e^{(s+i\omega)x}\rd \omega,\;\;\;\;s\in (\sigma_w^-, \sigma_w^+).
\]
For the error distribution function $G$ we write  $\phi_g(z):=\int_{-\infty}^\infty e^{-zx}\rd G(x)$, and note
that the integral convergence region necessarily includes   the imaginary axis $\{z\in \bC: Re(z)=0\}$
with $\phi_g(i\omega)$ being the characteristic function of~$G$.
In what follows we assume that $\Sigma_g$ is a vertical strip in the complex plane, 
$\Sigma_g:=\{z\in \bC: Re(z)\in (\sigma_g^-, \sigma_g^+)\}$ for some $\sigma_g^-<0<\sigma_g^+$.
\par 
Our estimator uses a kernel whose construction relies upon {\em the linear functional strategy} for 
solution of ill-posed  problems (see, e.g., \cite{GB:1979}). Let $K\in C^\infty(\bR)$ be a kernel 
on $[-1,1]$ satisfying standard conditions:  for fixed $k\in \mathbb{Z}_+$ 
\begin{align}\label{eq:Kernel}
\int^1_{-1}K(t)dt=1,\;\;\;\;\int^1_{-1}t^jK(t)dt=0,\;\;\forall j=1,\ldots, k.	
\end{align}
Note that $\phi_K(z)$ is an entire function, i.e.\ $\Sigma_K=\mathbb{C}$. We would like to find a function
$L:\bR\to\bR$ such that for any given $x_0\in \bR$
\begin{equation}\label{eq:lin-fun}
 \int_{-\infty}^\infty L(y-x_0) f_Y(y)\rd y = \frac{1}{h}\int_{-\infty}^\infty K\Big(\frac{x-x_0}{h}\Big)f(x)\rd x,
\end{equation}
where we 
recall that $f_Y$ and $f$ are related to each other by the convolution integral (\ref{MOD_2}).
If function $L$ satisfying (\ref{eq:lin-fun}) is found then a reasonable estimator of $f(x_0)$ 
is given by the empirical 
estimator of the integral on the left hand side of (\ref{eq:lin-fun}) based on the sample $\cY_n$.
In our deconvolution problem this strategy is realized as follows.  
\par 
In addition to the analyticity of $\phi_g$ in $\Sigma_g$ we suppose
 that 
 $\phi_g(z)$ 
 does not vanish on the set 
 $\{z: Re(z)\in \left(\kappa_g^-,\kappa_g^+\right) \setminus \{0\}\}$ 
 for some 
 $\kappa_g^-$, $\kappa_g^+$ such that $\sigma_g^-\leq \kappa_g^-<0<\kappa_g^+\leq \sigma_g^+$. 
 Note that $\phi_g$ may have zeros on the imaginary axis $\{z: Re(z)=0\}$, so that the conventional Fourier transform 
 technique would not work in this situation.
 Let $S_g:=\left\{z:Re(z)\in \left(-\kappa_g^+,-\kappa_g^-\right)\setminus \{0\}\right\}$; in fact, 
 $S_g$ is the union of two open vertical strips in the complex plane having the imaginary axis as the 
 boundary. 
 Note that 
 $\phi_g(-z)\ne 0$ on $S_g$,
and for  $h>0$ define
\begin{align*}
\phi_{L}(z):=\frac{\phi_K(zh)}{\phi_g(-z)},	\;\;\;z\in S_g.
\end{align*}
Obviously,  $\phi_L$ is analytic on $S_g$, and we define  
kernel $L_{h}^s$ as the inverse Laplace transform of~$\phi_L$:
\begin{align}\label{EQN_2.3}
L_{h}^{s}(x):=\frac{1}{2\pi}\int^\infty_{-\infty}\frac{\phi_K((s+i\omega)h)}{\phi_g(-s-i\omega)}e^{(s+i\omega)x}d\omega,\;\;\;s\in (-\kappa_g^+, -\kappa_g^-)\setminus \{0\}.
\end{align}
Depending on the sign of parameter $s$ formula (\ref{EQN_2.3}) defines two different kernels which in the sequel are  denoted 
$L_h^+(\cdot)$ for $s>0$ and  $L_h^-(\cdot)$ for $s<0$. If the integral on the right hand side of 
(\ref{EQN_2.3}) is absolutely convergent and 
\begin{equation*}
\int^\infty_{-\infty}|L_{h}^s(y-x_0)|f_Y(y)dy < \infty,  
\end{equation*}
then by Lemma~1 in \cite{BG:2019} kernels $L_{h}^s$ and $K$ are related to each other via 
(\ref{eq:lin-fun}). Then we define  the resulting density deconvolution estimator by    
\begin{align*}
\hat{f}^s_{h}(x_0)=\frac{1}{n}\sum^n_{i=1}	L^s_{h}(Y_i-x_0),\;\;\;s\in (-\kappa_g^+, -\kappa_g^-)\setminus 
\{0\}.
\end{align*}
\par 
While a general form of the kernel $L^s_{h}$ is given  in (\ref{EQN_2.3}), it would be beneficial to 
specialize it for particular error distributions.  We handle this in the next 
subsection in relation to  error characteristic functions $\phi_g$   having zeros on the imaginary axis. 
\subsection{Measurement Error Distributions}
The following assumption on characteristic function of measurement errors has been introduced~in~\cite{BG:2019}.
\begin{assume}[\textbf{E1}]
	$\phi_g$ is analytic in $\Sigma_g:=\{z:Re(z)\in (\sigma_g^-,\sigma^+_g)\}$ with 
	$\sigma_g^-<0<\sigma_g^+$ and admits the following representation
	\begin{align}\label{EQN_2.5}
	\phi_g(z)=\frac{1}{\psi(z)}\prod^q_{k=1} \Big(1-e^{a_kz-ib_k}\Big)^{m_k},	
	\end{align}
where $\{a_k\}^q_{k=1}$ and $\{b_k\}_{k=1}^q$ are real numbers, $a_k> 0$, $b_k\in [0, 2\pi)$ for all $k$,
$\{m_k\}^q_{k=1}$ are non-negative integer numbers, and pairs $\left\{(a_k,b_k)\right\}^q_{k=1}$ are distinct 
for all $k$. The function $\psi(z)$ has the following representation:
\begin{align*}
	\psi(z)=\psi_0(z)\prod_{k: b_k=0}(-a_kz)^{m_k}\prod_{k: b_k\ne 0}(1- e^{-ib_k})^{m_k},
\end{align*}
where $\psi_0(z)$ is analytic and has no zeros in a vertical strip 
$\Sigma_\psi$, 
$\{z:Re(z)=0\}\subset \Sigma_\psi\subseteq\Sigma_g$.
\end{assume}
Assumption~(E1) postulates that characteristic function $\phi_g(z)$ is analytic in a vertical strip and 
can be factorized in a product of two functions: the first function has zeros on the imaginary axis while the second 
one does not vanish is the strip. Under (\ref{EQN_2.5}), the zeros of $\phi_g(z)$ are 
$z_{k,j}=i(b_k+2\pi j)/a_k$, $j=0, \pm 1, \pm 2, \ldots$, $z_{k,j}\ne 0$, and the multiplicity of  $z_{k,j}$ is equal to~$m_k$ for any~$j$. 
\par 
Assumption~(E1) is rather general.
It holds for  a wide class of discrete and continuous distributions
for specific examples we refer to  \cite[Section~3.2]{BG:2019}. 
Since the main focus of this study is to investigate the effect of zeros multiplicity of $\phi_g(z)$  
on the estimation accuracy,  we will concentrate on 
the following  prototypical examples: 
\begin{itemize}
 \item[(a)] [$m$--convolution of $U(-\theta, \theta)$ distribution]. Let $G$ be the distribution function of   
 $m$--fold convolution of 
 the uniform distribution on $[-\theta, \theta]$, $\theta>0$. 
 In this case 
 \begin{equation}\label{eq:phi-uniform}
 \phi_g(z)= \bigg[\frac{\sinh (\theta z)}{\theta z}\bigg]^{m}=   
 e^{-m\theta z}(-2\theta z)^{-m}(1-e^{2\theta z})^m,
\end{equation}
so that Assumption~(E1) holds with $q=1$, $a_1=2\theta$, $b_1=0$, $m_1=m$ and 
$\psi(z)= (-2\theta z)^m  e^{m\theta z}$.
\item[(b)]  [Binomial distribution]. Let $G$ be the distribution function of the binomial random variable with parameters $m$ and $p=1/2$;
then 
\begin{equation}\label{eq:phi-binomial}
 \phi_g(z)= 2^{-m} (1+e^z)^m,
\end{equation}
so that Assumption~(E1) holds with $q=1$, $a_1=1$, $b_1=\pi$ and $\psi(z)=2^m$.
\end{itemize}
\subsection{Estimator and Zero Multiplicity}
Under Assumption~(E1) the kernel in (\ref{EQN_2.3}) 
takes the following particular form:
\begin{align}\label{EQN_2.7}
	L^s_{h}(t)=\frac{1}{2\pi}\int^\infty_{-\infty}\frac{\phi_K((s+i\omega)h)\psi(-s-i\omega)}{\prod^q_{k=1}(1-e^{a_k(s+i\omega) -ib_k})^{m_k}} e^{(s+i\omega)t} \rd\omega,\;\;\;s+i\omega\in S_g.
\end{align}
While the denominator does not vanish for $s\in (-\kappa^+_g, \kappa^-_g)\setminus \{0\}$, 
the kernel representation is either $L^+_h$ or $L^-_h$, depending on the sign of~$s$.  
For examples (a)~and~(b) discussed above we can substitute expressions for $\phi_g(z)$ 
given by (\ref{eq:phi-uniform}) and (\ref{eq:phi-binomial})
in (\ref{EQN_2.3}).
Then  expanding formally the integrand in series (for details see \cite[Section~4.1]{BG:2019})
we come to the following infinite series representation
for the kernels:
\begin{itemize}
 \item[(a)] $m$--convolution of $U(-\theta, \theta)$ distribution:
 \begin{align*}
L^\pm_{h}(t)=\frac{(\pm2\theta)^m}{h^{m+1}}\sum_{j=0}^\infty C_{j,m} K^{(m)}\left(\frac{t\mp\theta (2j+m)}{h}\right); 
\end{align*}
 \item[(b)] binomial distribution:
 \begin{align*}
L^\pm_{h}(t)=\frac{(\pm 2)^m}{h}\sum_{j=0}^\infty C_{j,m}K\left(\frac{t\mp j}{h}\right),	
\end{align*}
\end{itemize}
where  
\[
C_{j,m}:={j+m-1\choose m-1}
\] 
is the number of weak compositions of $j$ into $m$ parts (see, e.g., 
\cite{ST:2011}). Note that the derived kernels $L_h^\pm$ are not integrable,  and, in general,
condition (\ref{eq:lin-fun}) is fulfilled
only if $f$ has sufficiently light tails. That is why in the estimator  construction 
we truncate the infinite series by a cut--off parameter $N$ coming to the kernels
\begin{align}
 & L^\pm_{h, N}(t):=
 \frac{(\pm2\theta)^m}{h^{m+1}}\sum_{j=0}^N C_{j,m} K^{(m)}\left(\frac{t\mp\theta (2j+m)}{h}\right),
\label{eq:L-N-unif}
 \\
& L^\pm_{h, N}(t):=\frac{(\pm 2)^m}{h}\sum_{j=0}^N C_{j,m}K\left(\frac{t\mp j}{h}\right)
 \label{eq:L-N-binomial}
 \end{align}
for examples~(a) and (b) respectively.
\par 
The multiplicity of zeros clearly manifests itself in construction of kernel $L_{h, N}^\pm$:
in setting~(a) multiplicity $m$ determines ill--posedness of the deconvolution problem, and in the 
both settings 
coefficients $C_{j,m}$ in 
(\ref{eq:L-N-unif}) and (\ref{eq:L-N-binomial}) grow with $m$ affecting the variance of the corresponding estimators
in the case of heavy tailed densities $f$.
Intuitively, the larger multiplicity  $m$, the flatter 
the characteristic function $\phi_g(z)$ in the vicinity of zeros, and the harder the deconvolution problem. 
\par 
Based on the derived kernels we define the estimators of $f(x_0)$ in examples~(a) and~(b) by 
\begin{align}
{\rm (a)}\;\;\; &\;\;\; \hat{f}^\pm_{h,N}(x_0)
=\frac{1}{n}\sum_{i=1}^n\frac{(\pm2\theta)^m}{h^{m+1}}\sum_{j=0}^NC_{j,m}K^{(m)}\left(\frac{Y_i-x_0\mp\theta(2j+m)}{h}\right),	
\label{EQN_2.12}
\\
{\rm (b)}\;\;\; &\;\;\; \hat{f}^\pm_{h,N}(x_0)
=\frac{1}{n}\sum_{i=1}^n\frac{(\pm2)^m}{h}\sum_{j=0}^NC_{j,m}K\left(\frac{Y_i-x_0\mp j}{h}\right),	
\label{EQN_2.12(b)}
\end{align}
where
$h$ and  $N$ are two tuning parameters that should be specified.
 \section{Minimax Results}\label{sec:minimax}
 In this section we derive upper bounds on the risk of the estimators constructed in the previous section,
and show that they are rate optimal over functional classes 
characterized by the smoothness and tail conditions. The analysis of the risk for the both estimators in cases~(a) and~(b) coincides in almost every detail.
Therefore in the sequel we concentrate on the example~(a); the corresponding results for binomial
error distribution are discussed in Section~\ref{sec:conclusion}.

 \subsection{Functional Classes}
 The following assumption introduces the 
 functional class over which accuracy of $\hat{f}_{h, N}^\pm (x_0)$ will be assessed. 
 \begin{assume}[\textbf{F}]
 	Let $A$ and $B$ be a positive real numbers.  
 	\begin{itemize}
 	\item[(I)] For $\alpha>0$, a probability density $f$ belongs to the functional class $\mathscr{H}_\alpha(A)$ if $f$ is $\lfloor\alpha\rfloor:=\max\{n\in\mathbb{N}\cup \{0\}:n<\alpha\}$ times continuously differentiable, and 
 	\begin{align}\label{EQN_3.1}
 	\left|f^{(\lfloor\alpha\rfloor)}(t)-f^{(\lfloor\alpha\rfloor)}(t')\right|\le A|t-t'|^{\alpha-\lfloor\alpha\rfloor},\;\;\;\forall t,\;t'\in\mathbb{R}
 	\end{align}
 	\item[(II)] Let $q$ be a positive real number. We say that a probability density 
 	$f$ belongs to the functional class $\mathscr{N}_q(B)$ if 
	\begin{align}\label{EQN_3.2}
 	f(t)\leq B|t|^{-q},\;\;\;\forall t\in \bR.	
 	\end{align}
	 \end{itemize}
	 Combining the two conditions in Assumption~(F), we define the following functional class: 
\begin{align*}
\mathscr{W}_{\alpha,q}(A,B):= \mathscr{H}_\alpha(A)\cap\mathscr{N}_q(B).
\end{align*}
 \end{assume}
\begin{remark}
While first assumption defines  the usual H\"older class $\sH_\alpha(A)$, the second condition imposes
a uniform upper bound on the decay of the tails 
of the measurement error density. 
Note that this tail condition is comparable to the moment condition in \cite[Definition 3]{BG:2019}. 
\end{remark}

 \subsection{Rates of Convergence}
Now we are in a position to establish upper bounds on the maximal risk of the estimator
$\hat{f}_{h, N}^\pm (x_0)$ defined in (\ref{EQN_2.12}). Let 
\begin{equation}\label{eq:est-1}
 \hat{f}_{h, N}(x_0):=\left\{\begin{array}{ll}
                              \hat{f}^+_{h,N}(x_0), & x_0\geq 0,\\
                              \hat{f}^-_{h,N}(x_0), & x_0<0,
                             \end{array}\right.
\end{equation}
\begin{equation}\label{eq:r-nu}
 r:= \left\{\begin{array}{ll}
(\alpha/q)(2m-1-q), & q<2m-1,\\
0, & q\geq 2m-1,
  \end{array} \right.,\;\;\; 
\nu:= 
\frac{\alpha}{2\alpha+2m+1+r},
  \end{equation}
and  define
\begin{equation}\label{eq:varphi}
 \varphi(n):= \left\{
 \begin{array}{ll} 
 \big(B^{1/\alpha}A^{{\frac{2m+1}{\alpha}}}\big)^\nu n^{-\nu}, & q > 2m-1, \\*[3mm]
\big(B^{1/\alpha}A^{\frac{2m+1}{\alpha}}\big)^\nu \big(\frac{\log n}{n}\big)^\nu, & q=2m-1, \\*[3mm]
\big(B^{\frac{2m-1}{\alpha q}}A^{\frac{2m+1}{\alpha}}\big)^\nu  n^{-\nu}, & q<2m-1.
                    \end{array}
\right.
\end{equation}
 \begin{restatable}{theorem}{first}
\label{THM_1}
Let $f\in \mathscr{W}_{\alpha,q}(A,B)$ with $q>0$, and let $\phi_g(z)=[\sin(\theta z)/(\theta z)]^m$, 
$m\in \mathbb{N}$. Let 
$\hat{f}_{h,N}(x_0)$ be the estimator defined 
in (\ref{eq:est-1}) and (\ref{EQN_2.12})  and associated with kernel $K$ satisfying condition (\ref{eq:Kernel}) with parameter
$k\geq \alpha+1$. Then with $h=h_*$ and $N=N_*$ defined in (\ref{eq:q>})--(\ref{eq:q<}) in the proof of the theorem 
one has
\begin{equation*}
 \limsup_{n\to\infty} \Big\{ [\varphi(n)]^{-1}\cR_n [\hat{f}_{h_*, N_*}; \sW_{\alpha, q}(A, B)]\Big\} \leq C_1,
\end{equation*}
where $C_1$ is a constant independent of $A$ and $B$. 
\end{restatable}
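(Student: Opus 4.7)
The plan is to bound the MSE via the standard bias--variance decomposition
\[
\cR_n^2[\hat f_{h,N}, f] \;=\; \bigl|\rE_f \hat f_{h,N}(x_0) - f(x_0)\bigr|^2 + \mathrm{Var}_f\bigl(\hat f_{h,N}(x_0)\bigr),
\]
analyze the two terms separately for $\hat f^+_{h,N}$ with $x_0 \geq 0$ (the other sign being symmetric), and then optimize in $(h, N)$. A preliminary observation used throughout is that since the $m$-fold uniform convolution density $g$ is supported on $[-m\theta, m\theta]$, the tail condition $f \in \sN_q(B)$ transfers to $f_Y = f \star g$: one gets $f_Y(y) \lesssim B (|y|\vee 1)^{-q}$, with a constant depending only on $q$, $m$, and $\theta$.

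For the variance I would start from $\mathrm{Var}_f(\hat f^+_{h,N}(x_0)) \leq n^{-1}\int |L^+_{h,N}(y-x_0)|^2 f_Y(y)\,dy$. Because $K$ is supported in $[-1,1]$, for $h$ small enough the shifted bumps $K^{(m)}((\,\cdot\,-\theta(2j+m))/h)$ in (\ref{eq:L-N-unif}) have pairwise disjoint supports, so the squared $L^2$-integral collapses to a single sum. After the change of variable $u=(y-x_0-\theta(2j+m))/h$ one obtains
\[
\mathrm{Var}_f\bigl(\hat f^+_{h,N}(x_0)\bigr) \;\lesssim\; \frac{1}{n\,h^{2m+1}}\sum_{j=0}^{N} C_{j,m}^2 \sup_{|u|\leq 1} f_Y\bigl(x_0 + \theta(2j+m) + hu\bigr).
\]
The $j=0$ term is $O((nh^{2m+1})^{-1})$; for $j\geq 1$, inserting $C_{j,m}\asymp j^{m-1}$ and the tail estimate on $f_Y$ yields an extra contribution of order $(nh^{2m+1})^{-1} B\sum_{j=1}^N j^{2m-2-q}$. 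This partial sum is $O(1)$ when $q>2m-1$, $O(\log N)$ when $q=2m-1$, and $O(N^{2m-1-q})$ when $q<2m-1$; this phase transition is precisely the source of the exponent $r$ in (\ref{eq:r-nu}).

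For the bias, applying the linear-functional identity (\ref{eq:lin-fun}) to the untruncated kernel $L^+_h$ gives
\[
\rE_f \hat f^+_{h,N}(x_0) - f(x_0) \;=\; \Bigl[\tfrac{1}{h}\!\int\! K\bigl(\tfrac{x-x_0}{h}\bigr) f(x)\,dx - f(x_0)\Bigr] \;-\; \int \bigl(L^+_h - L^+_{h,N}\bigr)(y-x_0)\, f_Y(y)\,dy.
\]
The first term is bounded by the classical H\"older argument with $K$ satisfying (\ref{eq:Kernel}) for $k\geq \alpha+1$, giving $\lesssim A\,h^\alpha$. For the truncation remainder I would use the explicit series in (\ref{eq:L-N-unif}) together with the tail estimate on $f_Y$ to obtain $\lesssim B\,h^{-m}\sum_{j>N} j^{m-1-q}$; a sharper bound removing the $h^{-m}$ prefactor can be obtained by Taylor-expanding $f_Y$ at each shifted node and invoking $\int K^{(m)}(u)u^\ell\,du=0$ for $\ell \in \{0,\ldots,k\}\setminus\{m\}$, together with a tail bound on $f_Y^{(m)}$.

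Finally, balancing squared bias and variance under the ansatz $h \asymp n^{-1/(2\alpha+2m+1+r)}$ and choosing $N$ just large enough that the truncation bias is dominated by the kernel bias produces the rate $\varphi(n)$ with the advertised dependence on $A$ and $B$ given in (\ref{eq:varphi}). I expect the main obstacle to be the regime $q<2m-1$ (and especially $q\leq m$), where $N$ appears simultaneously in the amplified variance and in the divergent truncation sum, so the two competing roles of $N$ must be disentangled carefully while tracking the correct powers of $A$ and $B$; verifying that the estimate on $f_Y$ is uniform over $\sW_{\alpha,q}(A,B)$ is the remaining bookkeeping burden.
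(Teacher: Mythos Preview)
Your variance analysis is essentially correct and matches the paper: the disjoint supports reduce the squared sum to a single sum, and the tail decay of $f_Y$ (or, as the paper does, of $f$ after opening the convolution) produces $\sum_{j} j^{2m-2-q}$, hence the three-way split in $\psi_N$.

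The gap is in the truncation bias. Your plan is to bound $\int (L^+_h - L^+_{h,N})(y-x_0)\,f_Y(y)\,dy$ term by term over $j>N$. Even after the Taylor refinement (which does kill the $h^{-m}$ factor, since $\int K^{(m)}(u)u^\ell\,du=0$ for $\ell<m$ and $f_Y^{(m)}$ is a finite combination of shifts of $f$), you are left with a bound of order
\[
\sum_{j>N} C_{j,m}\,|f_Y^{(m)}(x_0+\theta(2j+m))| \;\lesssim\; B\sum_{j>N} j^{m-1-q}.
\]
This sum diverges whenever $q\le m$, so the argument breaks down completely in that range; and for $m<q<2m-1$ it gives $O(BN^{m-q})$ rather than $O(BN^{-q})$. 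Balancing against the variance then forces a larger $N$, and a short computation shows the resulting rate is strictly slower than $\varphi(n)$ throughout the heavy-tail regime $q<2m-1$---precisely the regime you flagged as delicate.

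What you are missing is a \emph{telescoping identity}: the infinite tail $\sum_{j>N}C_{j,m}(\cdots)$ collapses to a finite sum of only $m$ terms. The paper invokes (from \cite{BG:2019}) the closed form
\[
\rE_f\bigl[\hat f^+_{h,N}(x_0)\bigr]=\frac{1}{h}\int K\Big(\frac{t-x_0}{h}\Big)f(t)\,dt
+\sum_{j=1}^{m}\binom{m}{j}(-1)^{j}\int_{-1}^{1}K(y)\,f\bigl(yh+x_0+2\theta(N+1)j\bigr)\,dy,
\]
which follows from the generating-function relation between $C_{j,m}=\binom{j+m-1}{m-1}$ and $(1-x)^m$. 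With this identity the truncation bias is immediately $\lesssim B(\theta N)^{-q}$ via the tail condition on $f$ itself, with no $h$-dependence and no convergence restriction on $q$. This is the single missing ingredient; once you have it, your optimization in $(h,N)$ goes through exactly as you sketched.
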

\begin{remark}\mbox{}
\begin{itemize}\item[(a)]
 The result of Theorem~\ref{THM_1} shows how the tail behavior of $f$ and  
 zeros multiplicity $m$ affect the estimation accuracy. If the tail of $f$ is sufficiently light, i.e., $q>2m-1$, then 
 the risk of $\hat{f}_{h_*, N_*}(x_0)$ converges to zero at the rate $n^{-\alpha/(2\alpha+2m+1)}$ which 
 is obtained in the ordinary smooth case with $\gamma=m$ and non--vanishing characteristic function 
 $\phi_g$  [see Assumption~(E0)]. On the other hand, for heavy tailed densities $f$ with  
 $q<2m-1$ the maximal risk of $\hat{f}_{h_*, N_*}(x_0)$ converges at a slower rate, and parameter $r$ 
in (\ref{eq:r-nu}) characterizes  deterioration in the convergence rate. 
\item[(b)] The existence of different regimes depending on the tail behavior of $f$
and zeros multiplicity $m$ has been noticed in \cite{BG:2019}; however, the case of heavy tailed densities
has not been studied there. 
\end{itemize}
\end{remark}
\par 
Next theorem provides a lower bound on the  minimax risk of estimation over functional class~$\sW_{\alpha, q}(A,B)$. 
\begin{restatable}{theorem}{second}
\label{THM_2}
Let $f\in \mathscr{W}_{\alpha,q}(A,B)$ for $q>1$ and $\phi_g(z)=[\sin(\theta z)/(\theta z)]^m$, $m\in \mathbb{N}$. Then 
\begin{align*}
\liminf_{n\to\infty} \Big\{ 
\big(A^{-(2m+1)/\alpha}\, n\big)^{\nu}\,
\mathcal{R}^*_n\left[\mathscr{W}_{\alpha,q}(A,B)\right] \Big\}\ge
C_2,
\end{align*}
where $\nu$ is defined in  (\ref{eq:r-nu}), and $C_2$ is a positive constant independent of $A$.
 \end{restatable}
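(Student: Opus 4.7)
The plan is to apply Le Cam's two-hypothesis method. I will construct two densities $f_0, f_1 \in \mathscr{W}_{\alpha,q}(A,B)$ such that
\begin{equation*}
 |f_1(x_0) - f_0(x_0)| \;\asymp\; \varphi(n), \qquad n\,\chi^2(f_{Y,1}, f_{Y,0}) \;=\; O(1),
\end{equation*}
where $f_{Y,i} = f_i \star \rd G$. Pinsker's inequality then gives $\|\rP_{f_0}^n - \rP_{f_1}^n\|_{\mathrm{TV}} \leq 1 - c$ for a fixed $c>0$, and the two-point Le Cam bound
\begin{equation*}
 \cR_n^*[\mathscr{W}_{\alpha,q}(A,B)] \;\geq\; \tfrac{1}{2}\, |f_1(x_0) - f_0(x_0)| \, \bigl(1 - \|\rP_{f_0}^n - \rP_{f_1}^n\|_{\mathrm{TV}}\bigr)
\end{equation*}
delivers $\cR_n^* \gtrsim \varphi(n)$.

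I would choose the baseline $f_0$ as a smooth density in the interior of $\mathscr{W}_{\alpha,q}(A/2,B/2)$, bounded below at $x_0$ and with tail $f_0(t) \asymp (1+|t|)^{-q}$ that saturates the class; then $f_{Y,0} = f_0 \star \rd G$ is bounded below in a fixed neighborhood of $x_0$ and also has tail $\asymp (1+|y|)^{-q}$, so in the $\chi^2$-integral $\int [(p \star \rd G)(y)]^2 / f_{Y,0}(y) \, dy$ the weight $1/f_{Y,0}(y)$ is controlled by $C(1+|y|)^q$. For the light-tail regime $q \geq 2m-1$ (where $r=0$) the classical construction suffices: take $p = f_1 - f_0$ to be $p(x) = A_0 h^\alpha K_0((x-x_0)/h)$ for a smooth, compactly supported, mean-zero bump $K_0$ with sufficiently many vanishing moments and bandwidth $h \asymp n^{-1/(2\alpha+2m+1)}$. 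By Plancherel combined with $|\phi_g(i\omega)| \asymp |\omega|^{-m}$ at infinity, $\|p \star \rd G\|_2^2 \asymp h^{2\alpha+2m+1}$; since $p \star \rd G$ has compact support on which $f_{Y,0} \geq c>0$, this is $\asymp \chi^2$, yielding $\chi^2 \asymp n^{-1}$ and $|p(x_0)| \asymp h^\alpha = n^{-\alpha/(2\alpha+2m+1)} = \varphi(n)$.

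In the heavy-tail regime $q < 2m-1$ the local bump alone only delivers the faster rate $n^{-\alpha/(2\alpha+2m+1)}$, so a larger perturbation at $x_0$ must be engineered by exploiting both the heavy tail of $f_{Y,0}$ and the order-$m$ zero of $\phi_g$ at $\omega = \pm\pi/\theta$. I would take $p$ to be a two-scale bump combining a local part of width $h$ (using the Hölder freedom, amplitude $\lesssim h^\alpha$) with a long-range component whose Fourier content sits at a zero of $\phi_g$ and whose envelope has scale $M \gg 1$ (using the tail freedom, amplitude $\lesssim M^{-q}$). A Taylor expansion of the wide envelope at $\omega = \pm\pi/\theta$ converts the $m$-fold Fourier vanishing of $\phi_g$ there into a pointwise decay $(p \star \rd G)(y) \lesssim M^q (1+|y|)^{-(q+m)}$, so the weighted chi-squared of the long-range part is $\asymp M^{q+1-2m}$ times the square of its amplitude. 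Imposing the Hölder constraint on the local amplitude, the tail constraint $\lesssim M^{-q}$ on the long-range amplitude, and the chi-squared budget, and then optimizing jointly in $(h, M)$, produces the announced exponent $\nu = \alpha/(2\alpha + 2m + 1 + r)$ with $r = (\alpha/q)(2m-1-q)$. The main obstacle is the heavy-tail construction: identifying a perturbation whose convolution with $\rd G$ has the correct pointwise polynomial decay (dictated by the multiplicity $m$ of the zero of $\phi_g$) while keeping $|p(x_0)|$ of the claimed size, and then verifying that the joint optimization in the two scales delivers precisely the exponent $\nu$ in the $\liminf$ statement.
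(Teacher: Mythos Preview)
Your framework (Le Cam two-point, $\chi^2$-control, heavy-tailed baseline $f_0$ with $f_{Y,0}(y)\asymp(1+|y|)^{-q}$) is correct, and your light-tail construction coincides with step~(iv) of the paper's proof.

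The heavy-tail construction, however, has a genuine gap: a perturbation whose Fourier mass sits at a \emph{single fixed} zero $\omega=\pm\pi/\theta$ with a wide physical envelope of scale $M$ cannot reach the exponent~$\nu$. With amplitude $a_M$ your Taylor-expansion argument gives $(p_M\star\rd G)(y)=O(a_M M^{-m})$ and hence weighted $\chi^2\lesssim a_M^2 M^{q+1-2m}$; the nonnegativity/tail constraint $|p_M(x)|\le f_0(x)$ forces $a_M\lesssim M^{-q}$, and optimizing yields separation $n^{-q/(q+2m-1)}$. A direct check shows $q/(q+2m-1)>\nu$ whenever $\alpha<\infty$, so this lower bound is strictly weaker than the one claimed. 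The local bump gives $n^{-\alpha/(2\alpha+2m+1)}$, also weaker since $r>0$. Because the two pieces have essentially disjoint Fourier supports, an additive combination cannot beat the better of the two individual rates; hence the joint optimization you invoke cannot produce $\nu$. (Your pointwise bound $(p\star\rd G)(y)\lesssim M^{q}(1+|y|)^{-(q+m)}$ is also dimensionally off; the correct uniform bound is $a_M M^{-m}(1+|y|/M)^{-K}$.)

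The idea you are missing is to place Fourier mass at \emph{many} zeros of $\phi_g$ located at \emph{high} frequency. The paper sets
\[
\phi_\eta(\omega)=\sum_{k=N+1}^{2N}\Big\{\phi_{\eta_0}\Big(\tfrac{\omega-\pi k/\theta}{h}\Big)+\phi_{\eta_0}\Big(\tfrac{\omega+\pi k/\theta}{h}\Big)\Big\},
\]
i.e.\ $N$ disjoint bumps of width $h$ centered at zeros $\pi k/\theta$ with $k\asymp N\to\infty$, so that in physical space $\eta(x)=2h\,\eta_0(hx)\sum_{k=N+1}^{2N}\cos(\pi kx/\theta)$. Three effects then combine: (i)~on each bump $|\phi_g(i\omega)|\lesssim(h/N)^m$, gaining both the order-$m$ vanishing \emph{and} the $|\omega|^{-m}$ decay at frequency $\asymp N$; (ii)~the $N$ cosines add coherently at $x_0=0$, so $\eta(0)\asymp hN$; (iii)~rapid decay of $\eta_0$ fixes the amplitude $M=h^{q-1}/N$ via the tail constraint (with $2s=q$). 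The H\"older constraint becomes $h^{q}N^\alpha\le A$, and controlling $\int x^{2s}|(g\star\eta)(x)|^2\rd x$ via $\int|\partial_\omega^s(\phi_g\phi_\eta)|^2\rd\omega$ yields $\chi^2\lesssim h^{2m+q-1}N^{-2m-1}$. Optimizing in the \emph{coupled} parameters $(h,N)$ under these constraints is what produces exactly~$\nu$. Your single-zero scheme corresponds to $N=1$ at fixed frequency, forfeiting both the $N^{-m}$ gain and the coherent factor~$N$, which is precisely the missing ingredient.
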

  \begin{remark}\mbox{}
  \begin{itemize}
\item[(a)] Theorems~\ref{THM_1} and~\ref{THM_2} show that there are two regimes in behavior
of the minimax risk. These regimes are characterized  by the tail behavior of the estimated density $f$ and  
the  multiplicity of zeros of the error characteristic function $\phi_g$. In the {\em light tail regime}, 
$q>2m-1$, the zeros of $\phi_g$ have no influence on the minimax rate of convergence: it is fully determined by the tail 
behavior of $\phi_g$. On the other hand, if $q<2m-1$  (the  {\em heavy tail regime}) then zeros of $\phi_g$ 
have significant influence on 
the minimax rate, it becomes much slower than in the case of non--vanishing $\phi_g$. 
\item[(b)] Theorems~\ref{THM_1} and~\ref{THM_2} demonstrate that   
the proposed estimator $\hat{f}_{h_*, N_*}(x_0)$ is rate optimal in  both {\em light tail} and 
{\em heavy tail regimes}. We note that on the boundary $q=2m-1$ between two regimes 
there is a logarithmic gap between the upper and lower bounds of Theorems~\ref{THM_1} and~\ref{THM_2}.
\end{itemize}
 \end{remark}
 \par 
 Thus far, the risk evaluations are under the functional class 
 $\mathscr{W}_{\alpha,q}(A,B)$ defined in Assumption~(F). 
Although these conditions are pretty reasonable in the context of the density deconvolution, 
they involve an extra assumption on the  
tail behavior of~$f$, and it is natural to ask what happens when the tail condition does not hold. 
The next result provides an answer to this question.
%
\begin{corollary}
 \label{THM_3}
Let  $\phi_g(z)=[\sin(\theta z)/(\theta z)]^m$, $m\in \mathbb{N}$;
then the following results hold
\begin{align}
&\liminf_{n\to \infty}\Big\{
 \psi_n^{-1} \mathcal{R}^*_n
 [\mathscr{H}_{\alpha}(A)]\Big\}\ge C_3, 
\label{eq:lower-holder}
 \\
&\limsup_{n\to \infty}\Big\{
 \psi_n^{-1} \mathcal{R}^*_n
 [\mathscr{H}_{\alpha}(A)\cap \sN_1(B)]\Big\}\le C_4, 
 \label{eq:upper-holder}
 \end{align}
where $\psi_n:=(A^{(2m+1)/\alpha}/ n)^{\frac{\alpha}{2m\alpha+2m+1}}$,
and $C_3$ and $C_4$ do not depend on $A$.
 \end{corollary}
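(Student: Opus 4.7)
The corollary addresses the boundary case $q = 1$, which lies outside the strict inequality requirements of Theorems~\ref{THM_1} and~\ref{THM_2}. My plan is to deduce the upper bound (\ref{eq:upper-holder}) as an immediate specialization of Theorem~\ref{THM_1} with $q = 1$, and to derive the lower bound (\ref{eq:lower-holder}) by re-running the hypothesis construction underlying Theorem~\ref{THM_2} directly at $q = 1$, without imposing any pointwise tail condition.

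For the upper bound, since $\mathscr{H}_\alpha(A) \cap \sN_1(B) = \sW_{\alpha, 1}(A, B)$, I apply Theorem~\ref{THM_1}. When $m \geq 2$ we have $q = 1 < 2m-1$, placing us in the third branch of (\ref{eq:varphi}); substituting $q=1$ into (\ref{eq:r-nu}) gives $r = 2\alpha(m-1)$ and $\nu = \alpha/(2m\alpha + 2m+1)$, so that $\varphi(n) = B^{(2m-1)\nu/\alpha}\,A^{(2m+1)\nu/\alpha}\,n^{-\nu}$, matching $\psi_n$ up to a $B$-dependent constant absorbed into $C_4$. The edge case $m = 1$ falls exactly on the boundary $q = 2m-1$, where the middle branch of (\ref{eq:varphi}) contributes an extra $(\log n)^\nu$ factor; this can be removed under the pointwise constraint $\sN_1(B)$ by sharpening the variance estimate in the proof of Theorem~\ref{THM_1} (with $m=1$, the coefficient $C_{j,m}$ collapses to $1$ and the tail weight simplifies significantly).

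For the lower bound, the naive route of invoking Theorem~\ref{THM_2} on $\sW_{\alpha, q}(A, B) \subseteq \mathscr{H}_\alpha(A)$ for some $q > 1$ and letting $q \downarrow 1$ is inadequate: one has $\nu(q) > \nu(1)$ strictly for $q > 1$, so the inherited bound is of order $n^{-\nu(q)}$, which is faster than $\psi_n$, and any passage to the limit injects an $A^{(2m+1)(\nu(q) - \nu(1))/\alpha}$ factor that spoils the uniformity in $A$ demanded of $C_3$. I therefore plan to re-execute the hypothesis construction underlying Theorem~\ref{THM_2} directly at $q = 1$. The standard deconvolution lower bound produces an Assouad cube $\{f_\sigma\}_{\sigma \in \{0,1\}^M}$ of the form $f_\sigma = f_0 + \delta_n \sum_{j=1}^M \sigma_j \psi_{j,n}$, where the bumps $\psi_{j,n}$ have Fourier transforms concentrated near the zeros of $\phi_g$ (so that $\psi_{j,n} \star \rd G$ is negligible in $L_2$), and the tail condition $\sN_q(B)$ with $q > 1$ is invoked solely to certify non-negativity of $f_\sigma$ at the far-field bump positions. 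Replacing the base density by a strictly positive $f_0 \in \mathscr{H}_\alpha(A)$ with an integrable but not polynomially controlled tail -- for instance $f_0(t) \asymp (|t|\log^2|t|)^{-1}$ for large $|t|$ -- preserves non-negativity of all $f_\sigma$ without invoking any polynomial tail bound, and the remainder of the argument (bounding the chi-squared distance between the laws of $Y_1, \ldots, Y_n$ under different $\sigma$ via Parseval and the local expansion of $\phi_g$ at its zeros) is essentially $q$-independent, yielding $\mathcal{R}^*_n[\mathscr{H}_\alpha(A)] \geq C_3 \psi_n$ with $C_3$ independent of $A$.

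The principal obstacle I anticipate is the explicit design of this base density $f_0 \in \mathscr{H}_\alpha(A)$: it must have the mild $(|t|\log^2|t|)^{-1}$ tail, remain $\alpha$-H\"older with constant $A$, and simultaneously accommodate $M$ widely-separated bumps of the required amplitude. The trade-off between $M$ (controlling the Assouad cube dimension), $\delta_n$ (the perturbation amplitude), and the bump positions $\tau_j$ must be re-balanced at $q = 1$ to reproduce exactly the exponent $\nu(1) = \alpha/(2m\alpha + 2m+1)$ appearing in $\psi_n$; the counting of admissible bump locations, previously driven by the polynomial tail envelope $f_0(t) \gtrsim |t|^{-q}$, must now be carried out under the weaker envelope $f_0(t) \gtrsim (|t|\log^2|t|)^{-1}$, yet yield the same exponent since only a polynomial-in-$n$ number of bumps is needed. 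Once this balancing is verified, the conclusion follows from the standard Assouad machinery, completing the proof.
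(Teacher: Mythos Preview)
Your upper bound argument matches the paper's: both simply invoke Theorem~\ref{THM_1} at $q=1$.

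For the lower bound, however, your plan diverges from the paper in two significant ways, and one of them contains a genuine gap. First, the paper does not use an Assouad cube at all---both in Theorem~\ref{THM_2} and in this corollary the argument is a two-hypothesis Le~Cam comparison with a single perturbation $\eta$ whose Fourier transform is supported near a block of zeros of $\phi_g$; for a pointwise risk this is the natural device, and your multi-bump cube with physical ``bump positions $\tau_j$'' is at best an unnecessary complication.

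Second, and more seriously, your proposed fixed base density $f_0(t)\asymp (|t|\log^2|t|)^{-1}$ does \emph{not} dominate the perturbation, so non-negativity of $f_1$ fails. The perturbation envelope satisfies $c_0M|\eta(x)|\le c\, h/(1+h^2x^2)$ once you take $M=N^{-1}$, and at $|x|\sim 1/h$ this is of order $h$, whereas your $f_0(1/h)\asymp h/\log^2(1/h)\ll h$ as $h\to 0$. So the step you flag as the ``principal obstacle'' is in fact an obstruction: no fixed density with tail strictly lighter than $|t|^{-1}$ can absorb the perturbation uniformly in~$h$.

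The paper's resolution is to let the base density itself scale with $h$: they take the Cauchy-type density $f_0(x)=h/[\pi(1+h^2x^2)]$. This has exactly the envelope $h/(1+h^2x^2)$ needed to dominate $c_0M\eta$, it lies in $\sH_\alpha(A)$ for small $h$ since its H\"older seminorm is $O(h^{\alpha+1})$, and because $1/f_0(x)=\pi(h^{-1}+hx^2)$ the $\chi^2$ bound picks up an extra factor of $h$ that, after balancing $N_*=(A/h_*)^{1/\alpha}$ and $\chi^2\le c h^{2m}N^{-2m-1}\le c/n$, produces precisely the exponent $\alpha/(2m\alpha+2m+1)$.
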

 \begin{remark}
 In view of  (\ref{eq:lower-holder}), the rate of convergence $\psi_n$  on the functional class $\sH_\alpha(A)$ 
 is significantly slower   than the one achieved on $\sH_\alpha(A)$ in the setting with non--vanishing 
 characteristic function  $\phi_g$. Note that 
 the upper bound in (\ref{eq:upper-holder}) is achieved on a slightly smaller functional class. 
 The assumption $f\in \sN_1(B)$
 is very mild and is fulfilled for virtually any probability density. However it  does not hold uniformly for all densities.  
 We were not able to derive the upper bound (\ref{eq:upper-holder}) without this additional condition.
\end{remark}

\section{Adaptive Procedure}\label{sec:adaptive}
The minimax results in the previous section can only be achieved when 
the information on the functional class is known to us in advance. This is evident by observing that 
the optimal choice of tuning parameters $h_*$ and $N_*$ requires knowledge of the functional class. 
However, in most of applications, it is extremely rare to have the advance information about the functional class where the target function $f$ resides in.  
Therefore, it is natural to ask whether one can construct an estimator with 
the equivalent or comparable accuracy guarantees 
without knowing the functional class parameters.  
\par 
In this section we develop an adaptive estimator of $f(x_0)$ whose construction 
is based on the idea of data--driven selection from a
family of estimators $\{\hat{f}_{h, N}(x_0): (h, N)\in \cH\times \cN\}$, where $\hat{f}_{h, N}(x_0)$ is
defined in the previous section, and $\cH$ and $\cN$ are some fixed sets of bandwidths and cut--off parameters.  
Since the estimators $\hat{f}_{h, N}(x_0)$ depend on two tuning parameters, 
we adopt the general method of adaptive estimation proposed 
in \cite{GL:2011}. 

%
\subsection{Selection Rule} 
Let $\cH$ and $\cN$ be the discrete sets defined as follows: 
for  real numbers $0<h_{\min}<h_{\max}=\theta$ and 
integer number $N_{\max}$ to be specified later
\begin{equation*}
 \cH:=\big\{h \in [h_{\min}, h_{\max}]:  h=2^{-j}h_{\max},\; j=0, \ldots, M_h \big\},\;\;\;
 \cN:=\big\{j: j=1, \ldots, N_{\max}=:M_N\big\},
\end{equation*}
where $M_h:= \lfloor \log_2 (h_{\max}/h_{\min})\rfloor$ and $M_N:=N_{\max}$ denote 
the cardinality of   $\cH$ and $\cN$ respectively. 
\par 
Let $\mathcal{T}:=\mathcal{H}\times\mathcal{N}$,  
define $\tau:=(h, N)$, and consider the family of estimators 
$\cF(\cT)=\{\hat{f}_\tau^\pm (x_0),\;\;\tau\in \cT\}$,
where $\hat{f}^\pm_\tau(x_0)=\hat{f}_{h, N}^\pm(x_0)$ is defined in 
(\ref{EQN_2.12}) and (\ref{eq:est-1}). The adaptive estimator is based on   data--driven selection from the 
family~$\cF(\cT)$. For the sake of definiteness in the sequel we assume that $x_0\geq 0$ and 
consider estimators $\hat{f}^+_\tau(x_0)$ only; the case $x_0<0$ and $\hat{f}^-_\tau(x_0)$ is handled 
in exactly the same way. 
\par 
The selection rule uses auxiliary estimators that are constructed as follows.
For  $\tau, \tau^\prime \in \cT$ let  $\tau\vw\tau':=	(h\vee h',N\wedge N')$  denote 
the operation of coordinate-wise maximum and minimum.
With any pair $\tau, \tau^\prime\in \cT$ we associate the estimator [cf.~(\ref{EQN_2.12})]  
\begin{align*}
\hat{f}^{+}_{\tau\vw\tau'}(x_0):=\frac{1}{n}\sum_{i=1}^n\frac{(2\theta)^m}{(h\vee h')^{m+1}}
\sum_{j=0}^{N\wedge N'}C_{j,m}K^{(m)}\left(\frac{Y_i-x_0 - \theta(2j+m)}{h\vee h'}\right).	
\end{align*}
Observe that $\hat{f}^+_{\tau \vw \tau^\prime}(x_0)=\hat{f}^+_{\tau^\prime\vw\tau}(x_0)$ for all $\tau, \tau^\prime\in \cT$.
\par 
	Selection rules based on convolution--type auxiliary kernel estimators are developed   in 
\cite{GL:2011,GL:2014}, while   
\citet{Lep:2015} uses auxiliary estimators that are based on the operation of 
point--wise maximum of multi--bandwidths. Our construction is close in spirit to the latter one; it is
dictated
 by the structure of estimators $\hat{f}^{\pm}_{h, N}(x_0)$  in the deconvolution problem.
\par 
An important ingredient in the construction of the proposed selection rule is a uniform upper bound 
 on the stochastic error of estimator $\hat{f}^+_\tau(x_0)$, $\tau\in \cT$. 
For $\tau\in \cT$  the stochastic error of $\hat{f}^+_\tau(x_0)$ is   
\begin{equation}\label{eq:xi}
 \xi_\tau(x_0):=\frac{1}{n} \sum_{i=1}^n  L^+_\tau(Y_i-x_0)-  \rE_f \big[L^+_\tau(Y_1-x_0)\big],
\end{equation}
where 
\[
 L^+_{\tau}(y):= \frac{(2\theta)^m}{h^{m+1}} 
 \sum_{j=0}^N C_{j,m} K^{(m)}\bigg(\frac{y-\theta(2j+m)}{h}\bigg);
\]
see (\ref{eq:L-N-unif}).
Define 
\begin{align}\label{eq:sigma-tau}
 \sigma_{\tau}^2 &:=\frac{(2\theta)^{2m}}{h^{2m+2}}\sum_{j=0}^NC_{j,m}^2\int_{-\infty}^\infty
 \left|K^{(m)}\left(\frac{y-x_0-\theta(2j+m)}{h}\right)\right|^2f_Y(y) \rd y.
\end{align}
The proof of Theorem~\ref{THM_1} shows that ${\rm var}_f\{\xi_\tau(x_0)\} \leq \sigma_\tau^2/n$.
Let  
\begin{equation}\label{eq:eta-u}
u_\tau:= 2^{m+1}\theta^m C_{N,m}\|K^{(m)}\|_\infty h^{-m-1},\;\;\;\;\;\;
\end{equation}
and  for real number $\kappa>0$ that will be specified later we put 
\begin{equation}\label{eq:Lambda-tau}
 \Lambda_\tau(\kappa):= \sigma_\tau \sqrt{\frac{2\kappa}{n}} + \frac{2u_\tau\kappa}{3n}.
\end{equation}
In Lemma~\ref{lem:xi} 
in Appendix we demonstrate that $\Lambda_\tau(\kappa)$ is a uniform upper bound on $|\xi_\tau(x_0)|$ in the sense
that all moments of the random variable $\sup_{\tau\in \cT} [|\xi_\tau(x_0)|- \Lambda_\tau(\kappa)]_+$ are suitably small
as $\kappa$ increases. Note however that $\Lambda_\tau(\kappa)$ cannot be used in  the selection rule because it depends on the unknown density. In order to overcome this problem 
we consider a data--driven uniform upper bound on 
$\xi_\tau(x_0)$ that is constructed as follows. 
\par 
For $\tau \in \cT$ let 
\[
 \hat{\sigma}_\tau^2:=
 \frac{1}{n} \sum_{i=1}^n \frac{(2\theta)^{2m}}{h^{2m+2}}\sum_{j=0}^N C_{j,m}^2 \,
  \bigg|K^{(m)}\bigg(\frac{Y_i-x_0-\theta(2j+m)}{h}\bigg)\bigg|^2. 
\]
Note that $\hat{\sigma}^2_\tau$ is the  empirical estimator of $\sigma_\tau^2$.
Let 
\begin{equation}\label{eq:Lambda-hat}
 \hat{\Lambda}_\tau (\kappa):= 7 \bigg(\hat{\sigma}_\tau \sqrt{\frac{2\kappa}{n}} +  \frac{2u_\tau \kappa}{3n}\bigg).
\end{equation}
With  the  introduced notation the selection rule is the following. For any $\tau\in \cT$ define
\begin{align}
\hat{R}_\tau(x_0):=\sup_{\tau^\prime\in \cT}
\left[\big|\hat{f}^+_{\tau\vw\tau'}(x_0)-\hat{f}^+_{\tau'}(x_0)\big|-\hat{\Lambda}_{\tau\vw\tau^\prime}(\kappa) 
 - \hat{\Lambda}_{\tau^\prime}(\kappa)\right]_+ 
\nonumber
\\
 \;+\; \hat{\Lambda}_\tau(\kappa) + \sup_{\tau^\prime\in \cT}\hat{\Lambda}_{\tau\vw \tau^\prime}(\kappa).
 \label{EQN_4.4}
\end{align}	
Then, the adaptive estimator $\hat{f}_*(x_0)$ is defined by
\begin{align}\label{EQN_4.5}
\hat{f}_*(x_0):=\hat{f}^+_{\hat{\tau}}(x_0),\;\;\;
\hat{\tau}=\big(\hat{h},\hat{N}\big):=\argmin_{\tau \in\mathcal{T}}\hat{R}_{\tau}(x_0).
\end{align}
\begin{remark}
The defined selection rule is fully data--driven; it only 
requires specification of
parameter~$\kappa$ in (\ref{eq:Lambda-hat}). This parameter provides a uniform control of 
the stochastic errors for the family of estimators $\cF(\cT)$, and has no relation 
to the properties of the density to be estimated.  In addition, the 
parameters $h_{\min}$ and $N_{\max}$ should be chosen; they  
determine the sets of admissible bandwidths $\cH$ and 
cut--off parameters $\cN$.
\end{remark}

\subsection{Oracle Inequality and Rates of Convergence}
For $h, h^\prime \in \cH$ and $N, N^\prime \in \cN$ define 
\begin{eqnarray}
\bar{B}_h(f) &:=&   \sup_{h^\prime \leq h}\sup_{x\in \bR}\bigg|\frac{1}{h}
\int_{-\infty}^\infty K\Big(\frac{t-x}{h}\Big) [f(t)-f(x)] \rd t\bigg|,
\label{eq:barB-h}
\\
 \bar{B}_N(x_0; f)&:=&  \max_{1\leq j\leq m}\sup_{|t|\leq \theta} 
 \sup_{N^\prime \geq N} \big[f\big(t+x_0+2\theta (N^\prime+1)j\big)\big],
\label{eq:barB-N}
 \end{eqnarray}
 and let 
 \begin{equation}\label{eq:bias-UB}
 \bar{B}_\tau(x_0; f):= 2^{m+1} \Big[\bar{B}_h(f) +
(1+\|K\|_1) \bar{B}_N(x_0; f)\Big].
\end{equation}

\begin{theorem}\label{th:oracle-ineq}
 Let $\hat{f}_{*}(x_0)$ be the estimator defined in  (\ref{EQN_4.4})-(\ref{EQN_4.5}) and associated 
 with parameter $\kappa>0$; then 
 \begin{align}\label{eq:oracle}
  |\hat{f}_*(x_0)-f(x_0)| \leq C_1 \inf_{\tau\in \cT} 
  \Big\{\bar{B}_\tau (x_0;f) +  \Lambda_\tau(\kappa) \Big\} + C_2\Big(\delta(x_0) + \frac{\kappa}{n}\Big),
 \end{align}
where $C_1$ is an absolute constant, $C_2$ depends only on $m$ and $\theta$, and $\delta(x_0)$ is a 
non--negative random variable
that admits the following bound: for any $p\geq 1$
\begin{equation}\label{eq:delta}
 \rE_f \big[\delta(x_0)\big]^p \leq C_3 M_h M_N [\bar{\Lambda}(\kappa)]^p 
\kappa^{-p} e^{-\kappa}, \;\;\;
\end{equation}
where $\bar{\Lambda}(\kappa):= \sup_{\tau\in \cT} \{(1+u_\tau) \Lambda_\tau(\kappa)\}$, and 
constant $C_3$ depends on $p$ only.

\end{theorem}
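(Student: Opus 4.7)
The plan is to follow the Goldenshluger--Lepski scheme adapted to our two--parameter family of deconvolution estimators, with the main technical effort going into replacing the empirical threshold $\hat{\Lambda}_\tau$ by its population counterpart $\Lambda_\tau$ uniformly over $\cT$.

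First I would fix an arbitrary $\tau\in\cT$ and write the basic decomposition
\[
 |\hat{f}_*(x_0)-f(x_0)|\;\leq\; |\hat{f}^+_{\hat\tau}(x_0)-\hat{f}^+_{\hat\tau\vw\tau}(x_0)|
 +|\hat{f}^+_{\hat\tau\vw\tau}(x_0)-\hat{f}^+_\tau(x_0)|+|\hat{f}^+_\tau(x_0)-f(x_0)|.
\]
By the definition of $\hat R_\tau$ in (\ref{EQN_4.4}) and the symmetry $\hat f^+_{\tau\vw\tau'}=\hat f^+_{\tau'\vw\tau}$, the first two summands are bounded in terms of $\hat R_{\hat\tau}(x_0)$, $\hat R_\tau(x_0)$, $\hat\Lambda_{\hat\tau}(\kappa)$, $\hat\Lambda_\tau(\kappa)$ and $\hat\Lambda_{\hat\tau\vw\tau}(\kappa)$. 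Since $\hat\tau$ is a minimiser of $\hat R_\cdot(x_0)$, I would use $\hat R_{\hat\tau}\leq\hat R_\tau$ and cancel the sup terms to collapse everything to a bound involving only $\hat\Lambda_\tau(\kappa)+\sup_{\tau'}\hat\Lambda_{\tau\vw\tau'}(\kappa)$ plus the bias-type residual produced by re-expressing $\hat f^+_{\tau\vw\tau'}-\hat f^+_{\tau'}$ around its expectation.

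Next I would handle the third summand in the standard bias+stochastic-error form
\[
 |\hat{f}^+_\tau(x_0)-f(x_0)|\;\leq\;\big|\rE_f \hat{f}^+_\tau(x_0)-f(x_0)\big|+|\xi_\tau(x_0)|.
\]
For the bias term I would invoke the linear functional identity (\ref{eq:lin-fun}) satisfied by the untruncated kernel $L_h^+$, write
\[
 \rE_f\hat f^+_\tau(x_0)-f(x_0)=\Big[\tfrac{1}{h}\!\int K\!\big(\tfrac{x-x_0}{h}\big)f(x)\rd x-f(x_0)\Big]
 -\int\big[L_h^+-L^+_{h,N}\big](y-x_0)f_Y(y)\rd y,
\]
bound the first bracket by $\bar B_h(f)$ from (\ref{eq:barB-h}), and expand the difference $L_h^+-L^+_{h,N}$ as the tail of the series (\ref{eq:L-N-unif}). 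Using $\|K^{(m)}\|_\infty$ and the support of $K$, the tail integrates against $f_Y=f\star \rd G$ to produce values of $f$ at shifted points $x_0+\theta(2j+m)$ for $j>N$, which is precisely what $\bar B_N(x_0;f)$ in (\ref{eq:barB-N}) controls. Putting these together gives $|\rE_f\hat f^+_\tau(x_0)-f(x_0)|\leq\bar B_\tau(x_0;f)$ up to the combinatorial constants gathered in (\ref{eq:bias-UB}).

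The stochastic error $\xi_\tau(x_0)$ in (\ref{eq:xi}) is a centred sum of i.i.d.\ bounded random variables with variance at most $\sigma_\tau^2/n$ and sup-norm bound $u_\tau$ from (\ref{eq:eta-u}); Bernstein's inequality therefore yields, uniformly in $\tau\in\cT$,
\[
 \Pr\Big\{|\xi_\tau(x_0)|>\Lambda_\tau(\kappa)\Big\}\leq 2e^{-\kappa}.
\]
Applying this to every $\tau$ and every pair $(\tau,\tau')$ (so also to $\tau\vw\tau'$) and collecting a union bound over $|\cT|=M_hM_N$ choices would be the route to Lemma~\ref{lem:xi} type estimates. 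The key step I anticipate as the main obstacle is replacing $\hat\Lambda_\tau$ by $\Lambda_\tau$: this requires concentrating the empirical second moment $\hat\sigma_\tau^2$ around $\sigma_\tau^2$. I would again apply Bernstein to the sum defining $\hat\sigma_\tau^2$ (whose summands are bounded by $u_\tau^2$ and have variance controlled by $u_\tau^2\sigma_\tau^2$), obtaining $\hat\sigma_\tau\leq\sqrt{3}\sigma_\tau+\text{small}$ and $\sigma_\tau\leq\sqrt{3}\hat\sigma_\tau+\text{small}$ on an event of probability $\geq 1-2e^{-\kappa}$; the factor $7$ in front of $\hat\Lambda_\tau$ in (\ref{eq:Lambda-hat}) is precisely tuned so that on this event $\Lambda_\tau(\kappa)\leq\hat\Lambda_\tau(\kappa)$ and conversely $\hat\Lambda_\tau(\kappa)\lesssim\Lambda_\tau(\kappa)$, uniformly over $\tau,\tau'\in\cT$.

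Finally, everything that is not ``typical'' is absorbed into the random remainder $\delta(x_0)$. Concretely, $\delta(x_0)$ should be defined as the sum, over $\tau,\tau'\in\cT$, of the positive parts $[|\xi_\tau(x_0)|-\Lambda_\tau(\kappa)]_+$, $[\hat\Lambda_\tau(\kappa)-c\Lambda_\tau(\kappa)]_+$ and their $\tau\vw\tau'$ analogues, each multiplied by the appropriate $(1+u_\tau)$ that appears in the bias-replacement step. By the exponential tails established above, each summand obeys $\rE[\,\cdot\,]^p\leq C[\bar\Lambda(\kappa)]^p\kappa^{-p}e^{-\kappa}$ via the identity $\rE[Z]_+^p=p\int_0^\infty t^{p-1}\Pr\{Z>t\}\rd t$; summing over the $M_hM_N$ pairs yields the bound (\ref{eq:delta}). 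Collecting the bias control, the stochastic control, and the threshold-replacement yields (\ref{eq:oracle}) with $C_1$ absolute and $C_2$ depending only on $m$ and $\theta$ through the combinatorial factors in $\bar B_\tau$ and $\bar\Lambda(\kappa)$.
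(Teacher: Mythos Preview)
Your overall architecture matches the paper's: the same triangle decomposition around $\hat\tau\vw\tau$, use of $\hat R_{\hat\tau}\le \hat R_\tau$, Bernstein for $\xi_\tau$, concentration of $\hat\sigma_\tau^2$ to trade $\hat\Lambda_\tau$ for $\Lambda_\tau$, and a remainder $\delta(x_0)$ collecting positive parts. So the strategy is right.

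There is one genuine gap, however. You only refer to ``the bias-type residual produced by re-expressing $\hat f^+_{\tau\vw\tau'}-\hat f^+_{\tau'}$ around its expectation'' and move on. The whole scheme hinges on the inequality
\[
\sup_{\tau'\in\cT}\big|B_{\tau\vw\tau'}(x_0;f)-B_{\tau'}(x_0;f)\big|\;\le\;\bar{B}_\tau(x_0;f),
\]
i.e.\ the bias \emph{difference} must be controlled by a quantity depending on $\tau$ alone; without this you cannot bound $\hat R_\tau(x_0)$ by $\bar{B}_\tau+\hat\zeta+\ldots$ and the minimisation over $\tau$ never closes. This is not automatic: the bias $B_\tau$ here has cross-terms of the form $S_{h}(x_0+2\theta(N+1)j)$ mixing the two tuning parameters, so $B_{\tau\vw\tau'}-B_{\tau'}$ contains pieces like $S_{h\vee h'}(x_0+2\theta(N\wedge N'+1)j)-S_{h'}(x_0+2\theta(N'+1)j)$ where both arguments differ. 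The paper devotes a separate step to this and it is exactly why $\bar B_h(f)$ carries the $\sup_{h'\le h}\sup_{x}$ and $\bar B_N(x_0;f)$ carries the $\sup_{|t|\le\theta}\sup_{N'\ge N}$; you should work this out explicitly. Relatedly, to collapse $\sup_{\tau'}\Lambda_{\tau\vw\tau'}(\kappa)$ to $\Lambda_\tau(\kappa)$ you need the monotonicity $\sigma^2_{\tau\vw\tau'}\le\sigma^2_\tau$ and $u_{\tau\vw\tau'}\le u_\tau$, which follows from the definition of $\vw$ and the structure of (\ref{eq:sigma-tau})--(\ref{eq:eta-u}); this is used in the paper and should be stated.

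Two smaller points. First, your plan to bound the truncation bias by expanding $L_h^+-L^+_{h,N}$ as the tail of the series (\ref{eq:L-N-unif}) runs into the growth $C_{j,m}\asymp j^{m-1}$ and will not give $\bar B_N(x_0;f)$ directly; the paper instead uses the closed-form identity (from the proof of Theorem~\ref{THM_1}) $\rE_f\hat f^+_{h,N}(x_0)=\tfrac{1}{h}\int K(\tfrac{t-x_0}{h})f(t)\,\rd t+T_N(f;x_0)$ with $T_N$ a \emph{finite} sum carrying binomial, not $C_{j,m}$, coefficients. Second, the factor $(1+u_\tau)$ in $\bar\Lambda(\kappa)$ does not come from ``the bias-replacement step''; it arises because the paper packages the $\hat\sigma_\tau^2$-concentration error as $\eta(x_0)=\sup_\tau[|\hat\sigma_\tau^2-\sigma_\tau^2|-u_\tau\Lambda_\tau(\kappa)]_+$, so the relevant positive part is at scale $u_\tau\Lambda_\tau(\kappa)$, and $\delta(x_0)$ is taken as $\zeta(x_0)+\eta(x_0)$ (suprema over $\cT$, not sums over pairs).
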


\begin{remark}
 Explicit  expressions for constants $C_1$, $C_2$ and $C_3$ appear in the proofs of Theorem~\ref{th:oracle-ineq}
 and Lemma~\ref{lem:Lambda-Lambda-tilde}. Note that the oracle inequality holds for any probability density $f$, without  any functional class assumptions.
\end{remark}
\par 
The oracle inequality (\ref{eq:oracle}) allows us to derive the following result on the accuracy
of the adaptive estimator $\hat{f}_*(x_0)$ on the class $\sW_{\alpha, q}(A, B)$. 
\begin{corollary}\label{cor:2}
 Suppose that $f\in \sW_{\alpha, q}(A, B)$ with $q\geq 1$. 
 Let $\cF(\cT)$ be the family of estimators $\big\{\hat{f}^+_{h, N}(x_0), (h, N)\in \cH\times \cN\big\}$ with  
 \begin{equation}\label{eq:hmin-Nmax}
  h_{\min}:= \Big(\frac{\log n}{n}\Big)^{1/(2m+1)},\;\;h_{\max}=\theta,\;\;
  N_{\max}:=\Big(\frac{n}{\log n}\Big)^{1/(2m)}.
 \end{equation}
 Let $\hat{f}_*(x_0)$ be the estimator 
 defined  by selection rule  (\ref{EQN_4.4})-(\ref{EQN_4.5}) and associated with 
 parameter~$\kappa=\kappa_*:= 5 \log n$; then 
  \[
 \limsup_{n\to\infty} \Big\{ \Big[\varphi\Big(\frac{n}{\log n}\Big)\Big]^{-1}\cR_n [\hat{f}_{h_*, N_*}; \sW_{\alpha, q}(A, B)]\Big\} \leq C,
\]
where $\varphi(\cdot)$ is defined in (\ref{eq:varphi}), and $C$ does not depend on $A$ and $B$.
\end{corollary}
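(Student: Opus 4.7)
The plan is to apply the oracle inequality (\ref{eq:oracle}) from Theorem~\ref{th:oracle-ineq} with $\kappa=\kappa_*=5\log n$, and then to bound each ingredient $\bar{B}_\tau(x_0;f)$, $\Lambda_\tau(\kappa_*)$, $\delta(x_0)$, and $\kappa_*/n$ under the assumption $f\in\sW_{\alpha,q}(A,B)$. By taking the second moment of (\ref{eq:oracle}), the risk is controlled by $\inf_\tau\{\bar{B}_\tau(x_0;f)+\Lambda_\tau(\kappa_*)\}$ up to remainders, so the main work is to realize the analogue of the bias--variance trade--off from the proof of Theorem~\ref{THM_1}, but with $n$ replaced by $n/\log n$ because $\kappa_*\asymp\log n$.

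First, for the deterministic bias I would bound $\bar{B}_h(f)$ and $\bar{B}_N(x_0;f)$. Using the vanishing moment conditions (\ref{eq:Kernel}) on $K$ with $k\geq\lfloor\alpha\rfloor+1$ together with $f\in\sH_\alpha(A)$, a standard Taylor expansion yields $\bar{B}_h(f)\leq C A h^\alpha$. Using $f\in\sN_q(B)$, for $N$ not too small, $f\bigl(t+x_0+2\theta(N'+1)j\bigr)\leq C B N^{-q}$ for all $|t|\leq\theta$, $1\leq j\leq m$, $N'\geq N$, which gives $\bar{B}_N(x_0;f)\leq C B N^{-q}$. Together, $\bar{B}_\tau(x_0;f)\leq C_m[A h^\alpha + B N^{-q}]$, matching the bias term in the proof of Theorem~\ref{THM_1}.

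Second, for the stochastic term $\Lambda_\tau(\kappa_*)$ I would bound $\sigma_\tau^2$ and $u_\tau$. Using $\|f_Y\|_\infty$ bounded under (F) and the change of variable $u=(y-x_0-\theta(2j+m))/h$ in (\ref{eq:sigma-tau}), $\sigma_\tau^2 \leq C h^{-2m-1}\sum_{j=0}^N C_{j,m}^2$. Since $C_{j,m}\asymp j^{m-1}$, we get $\sum_{j=0}^N C_{j,m}^2\asymp N^{2m-1}$, so $\sigma_\tau^2\leq C h^{-2m-1}N^{2m-1}$ and $u_\tau\leq C h^{-m-1}N^{m-1}$. Inserting into (\ref{eq:Lambda-tau}) with $\kappa_*=5\log n$, the $\sigma_\tau$--term dominates for $\tau$ in the admissible range, giving $\Lambda_\tau(\kappa_*)\leq C h^{-(m+1/2)} N^{(2m-1)/2}\sqrt{\log n/n}$. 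Minimizing the upper bound $C[Ah^\alpha+BN^{-q}+h^{-(m+1/2)}N^{(2m-1)/2}\sqrt{\log n/n}]$ over $(h,N)\in\cH\times\cN$ is exactly the optimization performed in the proof of Theorem~\ref{THM_1}, with $n$ replaced by $n/\log n$; the optimal value is $\varphi(n/\log n)$. I would then check that the optimizing $(h_*,N_*)$ lie inside $\cH\times\cN$: the choice of $h_{\min}$ and $N_{\max}$ in (\ref{eq:hmin-Nmax}) corresponds exactly to the scale at which bias and variance equate in the extreme heavy--tail/over--smoothed regimes, so $h_*\geq h_{\min}$ and $N_*\leq N_{\max}$ for all admissible $(\alpha,q,A,B)$ and all $n$ large enough.

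Finally, the remainder $C_2(\delta(x_0)+\kappa_*/n)$ must be shown to be of smaller order than $\varphi(n/\log n)$. With $\kappa_*=5\log n$, $\kappa_*/n=O(\log n/n)$, which is strictly smaller than $\varphi(n/\log n)$ for every admissible $(\alpha,m,q)$, since $\nu<1$. For $\delta(x_0)$, I would take $p=2$ in (\ref{eq:delta}): $M_h=O(\log n)$, $M_N=O(n^{1/(2m)})$, $\bar\Lambda(\kappa_*)$ is polynomial in $n$, and $e^{-\kappa_*}=n^{-5}$, so $\rE_f[\delta(x_0)^2]^{1/2}=O(n^{-c})$ for a sufficiently large constant $c$, again negligible against $\varphi(n/\log n)$. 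Combining these bounds with (\ref{eq:oracle}) and Minkowski's inequality proves the stated risk bound. The main obstacle I anticipate is the careful check that for every regime of $(\alpha,q,m)$ distinguished in (\ref{eq:r-nu})--(\ref{eq:varphi}), the oracle bandwidth $h_*$ and cut--off $N_*$ computed with $n/\log n$ in place of $n$ remain inside the pre--specified grid $\cH\times\cN$; this is a straightforward but somewhat tedious case analysis driven by the choice of $h_{\min}$ and $N_{\max}$ in (\ref{eq:hmin-Nmax}).
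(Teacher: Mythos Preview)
Your overall strategy---apply the oracle inequality (\ref{eq:oracle}), bound $\bar{B}_\tau$ and $\Lambda_\tau$ under the class assumptions, minimize over $\tau$, and control the remainder $\delta(x_0)$ via (\ref{eq:delta})---is exactly the one the paper follows. The treatment of the bias terms, the check that $h_*,N_*$ lie in the grid, and the remainder analysis are all correct in outline.

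There is, however, a genuine gap in your bound on $\sigma_\tau^2$. You use only $\|f_Y\|_\infty<\infty$ to obtain
\[
\sigma_\tau^2 \;\leq\; C\,h^{-2m-1}\sum_{j=0}^N C_{j,m}^2 \;\asymp\; C\,h^{-2m-1}N^{2m-1},
\]
and then claim that minimizing $Ah^\alpha+BN^{-q}+h^{-(m+1/2)}N^{(2m-1)/2}\sqrt{\log n/n}$ is ``exactly the optimization performed in the proof of Theorem~\ref{THM_1}.'' It is not. In the proof of Theorem~\ref{THM_1} the variance bound (\ref{eq:var-2}) is obtained by exploiting the tail condition $f\in\sN_q(B)$: the integrals $h^{-1}\int_{I_j(x_0)}f_Y(y)\,\rd y$ decay like $j^{-q}$, which after multiplying by $C_{j,m}^2\asymp j^{2m-2}$ gives $\sigma_\tau^2\leq c\,B\,\psi_N\,h^{-2m-1}$ with $\psi_N$ equal to $1$, $\log N$, or $N^{2m-q-1}$ according to the sign of $2m-1-q$. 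The paper's proof of Corollary~\ref{cor:2} explicitly invokes (\ref{eq:var-2}) for this reason. Your crude bound carries the full factor $N^{2m-1}$ in every regime; optimizing over $(h,N)$ then yields the rate $n^{-\alpha/(2\alpha+\alpha(2m-1)/q+2m+1)}$ (up to logs), which is strictly slower than $\varphi(n/\log n)$ for every finite $q$. In particular, in the light-tail regime $q>2m-1$ your variance term still grows with $N$ and forces a suboptimal trade-off, whereas the correct bound is independent of $N$. The fix is simply to replace your boundedness argument by the variance bound (\ref{eq:var-2}) from the proof of Theorem~\ref{THM_1}; everything else in your outline then goes through.
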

\begin{remark}
Note that the resulting rate is the same as the rate of convergence in Theorem~\ref{THM_1} 
except for the extra $\log n$ factor. It is a well-known fact by \citet{L:1991} that this factor 
cannot be avoided in the adaptive nonparametric estimation of a function at a single point.  
\end{remark}

\section{Concluding Remarks}\label{sec:conclusion}
We close  this paper with a few concluding remarks.
\par
In this paper 
 we concentrated on the setting when the error distribution is the $m$--fold convolution of the  
uniform distribution on $[-\theta, \theta]$. 
Here the error characteristic function has infinite number of isolated zeros
on the imaginary axis, each of them has the same multiplicity~$m$. 
Note that the results  of Theorems~\ref{THM_1},~\ref{THM_2}, and Corollary~\ref{cor:2}
also hold for the binomial error 
distribution ${\rm Bin}(m, 1/2)$ with the following minor changes in notation: 
in (\ref{eq:r-nu}) parameter $\nu$ should be 
redefined as $\nu= 1/(2\alpha+1+r)$, and in~(\ref{eq:varphi}) and in the statement of Theorem~\ref{THM_2}
 expression  $A^{(2m+1)/\alpha}$ should be replaced by $A^{1/\alpha}$.
The specific 
form of the error characteristic functions used in this paper 
facilitates derivation of lower bounds on the minimax risk. However, in general, 
the proposed technique is applicable 
to other error distributions whose charatceristic function has zeros on the imaginary axis.  
\par 
We developed rate optimal estimators with respect to the   point--wise  risk.   
It is worth noting there is a significant difference between settings with  point--wise and
$\bL_2$--risks when the error characteristic function has zeros on the imaginary axis. 
This fact has been already noticed in \cite{BG:2019}. Some results for density deconvolution 
with $\bL_2$--risk 
for non--standard error distributions appeared in  \cite{Me:2007} and \cite{HM:2007}. 
In general, deconvolution problems under  global losses with non--standard error distributions
deserve a thorough study.

\bibliographystyle{plainnat}
\bibliography{main.bib}

\begin{thebibliography}{22}
\providecommand{\natexlab}[1]{#1}
\providecommand{\url}[1]{\texttt{#1}}
\expandafter\ifx\csname urlstyle\endcsname\relax
  \providecommand{\doi}[1]{doi: #1}\else
  \providecommand{\doi}{doi: \begingroup \urlstyle{rm}\Url}\fi

\bibitem[Aubin(2000)]{Au:2011}
Jean-Pierre Aubin.
\newblock \emph{Applied functional analysis}.
\newblock John Wiley \& Sons, 2 edition, 2000.

\bibitem[Belomestny and Goldenshluger(2019)]{BG:2019}
Denis Belomestny and Alexander Goldenshluger.
\newblock Density deconvolution under general assumptions on the distribution
  of measurement errors.
\newblock \emph{arXiv preprint arXiv:1907.11024}, 2019.

\bibitem[Butucea and Tsybakov(2007{\natexlab{a}})]{Butucea-Tsybakov-1}
Cristina Butucea and Alexandre~B Tsybakov.
\newblock Sharp optimality in density deconvolution with dominating bias. {I}.
\newblock \emph{Teoriya Veroyatnoste\u{\i} i ee Primeneniya}, 52\penalty0
  (1):\penalty0 111--128, 2007{\natexlab{a}}.

\bibitem[Butucea and Tsybakov(2007{\natexlab{b}})]{Butucea-Tsybakov-2}
Cristina Butucea and Alexandre~B Tsybakov.
\newblock Sharp optimality in density deconvolution with dominating bias. {II}.
\newblock \emph{Teoriya Veroyatnoste\u{\i} i ee Primeneniya}, 52\penalty0
  (2):\penalty0 336--349, 2007{\natexlab{b}}.

\bibitem[Carroll and Hall(1988)]{CH:1988}
Raymond~J Carroll and Peter Hall.
\newblock Optimal rates of convergence for deconvolving a density.
\newblock \emph{Journal of the American Statistical Association}, 83\penalty0
  (404):\penalty0 1184--1186, 1988.

\bibitem[Delaigle and Meister(2011)]{DM:2011}
Aurore Delaigle and Alexander Meister.
\newblock Nonparametric function estimation under {F}ourier-oscillating noise.
\newblock \emph{Statistica Sinica}, 21\penalty0 (3):\penalty0 1065--1092, 2011.

\bibitem[Devroye(1989)]{De:1989}
Luc Devroye.
\newblock Consistent deconvolution in density estimation.
\newblock \emph{The Canadian Journal of Statistics/La Revue Canadienne de
  Statistique}, 17\penalty0 (2):\penalty0 235--239, 1989.

\bibitem[Fan(1991)]{Fa:1991}
Jianqing Fan.
\newblock On the optimal rates of convergence for nonparametric deconvolution
  problems.
\newblock \emph{The Annals of Statistics}, 19\penalty0 (3):\penalty0
  1257--1272, 1991.

\bibitem[Feuerverger et~al.(2008)Feuerverger, Kim, and Sun]{Fe:2008}
Andrey Feuerverger, Peter~T Kim, and Jiayang Sun.
\newblock On optimal uniform deconvolution.
\newblock \emph{Journal of Statistical Theory and Practice}, 2\penalty0
  (3):\penalty0 433--451, 2008.

\bibitem[Golberg(1979)]{GB:1979}
Michael~A Golberg.
\newblock A method of adjoints for solving some ill-posed equations of the
  first kind.
\newblock \emph{Applied Mathematics and Computation}, 5\penalty0 (2):\penalty0
  123--129, 1979.

\bibitem[Goldenshluger and Lepski(2011)]{GL:2011}
Alexander Goldenshluger and Oleg Lepski.
\newblock Bandwidth selection in kernel density estimation: oracle inequalities
  and adaptive minimax optimality.
\newblock \emph{The Annals of Statistics}, 39\penalty0 (3):\penalty0
  1608--1632, 2011.

\bibitem[Goldenshluger and Lepski(2014)]{GL:2014}
Alexander Goldenshluger and Oleg Lepski.
\newblock On adaptive minimax density estimation on {$\mathbb{R}^d$}.
\newblock \emph{Probability Theory and Related Fields}, 159\penalty0
  (3-4):\penalty0 479--543, 2014.

\bibitem[Groeneboom and Jongbloed(2003)]{GJ:2003}
Piet Groeneboom and Geurt Jongbloed.
\newblock Density estimation in the uniform deconvolution model.
\newblock \emph{Statistica Neerlandica}, 57\penalty0 (1):\penalty0 136--157,
  2003.

\bibitem[Hall and Meister(2007)]{HM:2007}
Peter Hall and Alexander Meister.
\newblock A ridge-parameter approach to deconvolution.
\newblock \emph{The Annals of Statistics}, 35\penalty0 (4):\penalty0
  1535--1558, 2007.

\bibitem[Lepski(1991)]{L:1991}
Oleg Lepski.
\newblock On a problem of adaptive estimation in {G}aussian white noise.
\newblock \emph{Theory of Probability \& Its Applications}, 35\penalty0
  (3):\penalty0 454--466, 1991.

\bibitem[Lepski(2015)]{Lep:2015}
Oleg Lepski.
\newblock Adaptive estimation over anisotropic functional classes via oracle
  approach.
\newblock \emph{The Annals of Statistics}, 43\penalty0 (3):\penalty0
  1178--1242, 2015.

\bibitem[Lounici and Nickl(2011)]{Nickl}
Karim Lounici and Richard Nickl.
\newblock Global uniform risk bounds for wavelet deconvolution estimators.
\newblock \emph{The Annals of Statistics}, 39\penalty0 (1):\penalty0 201--231,
  2011.

\bibitem[Meister(2007)]{Me:2007}
Alexander Meister.
\newblock Deconvolution from {F}ourier-oscillating error densities under decay
  and smoothness restrictions.
\newblock \emph{Inverse Problems}, 24\penalty0 (1):\penalty0 015003, 2007.

\bibitem[Meister(2009)]{Meister:09}
Alexander Meister.
\newblock \emph{Deconvolution problems in nonparametric statistics}, volume 193
  of \emph{Lecture Notes in Statistics}.
\newblock Springer-Verlag, Berlin, 2009.

\bibitem[Stanley(2011)]{ST:2011}
Richard~P Stanley.
\newblock \emph{Enumerative combinatorics}, volume~1 of \emph{Cambridge studies
  in advanced mathematics}.
\newblock Cambridge University Press, second edition, 2011.

\bibitem[Stefanski and Carroll(1990)]{SC:1990}
Leonard~A Stefanski and Raymond~J Carroll.
\newblock Deconvolving kernel density estimators.
\newblock \emph{Statistics}, 21\penalty0 (2):\penalty0 169--184, 1990.

\bibitem[Zhang(1990)]{Zhang:90}
Cun-Hui Zhang.
\newblock Fourier methods for estimating mixing densities and distributions.
\newblock \emph{The Annals of Statistics}, 18\penalty0 (2):\penalty0 806--831,
  1990.

\end{thebibliography}

\newpage

\begin{appendix}
\section{Proofs}

\subsection{Proof of Theorem~\ref{THM_1}}
\begin{proof}
In the subsequent proof $c_1, c_2, \ldots, $ stand for positive constants independent of $A$ and $B$.
Without loss of generality we assume that $x_0\geq 0$; the proof for the case $x_0<0$ is identical in every detail.
We follow the ideas  of the proof of Theorem~2 in \cite{BG:2019}. 
\par
(a). We begin with bounding the variance of $\hat{f}^+_{h, N}(x_0)$. 
It is shown in \cite{BG:2019} there that the variance of $\hat{f}_{h, N}^+(x_0)$ is bounded from above as follows
\begin{align}
 {\rm var}_f\big[\hat{f}^+_{h, N}(x_0)\big] \leq 
 \frac{(2\theta)^{2m}}{nh^{2m+2}} \sum_{j=0}^N C_{j,m}^2 
 \int_{-\infty}^\infty \Big|K^{(m)}\Big(\frac{y-x_0-\theta(2j+m)}{h}\Big)\Big|^2 f_{Y}(y)\rd y
 \nonumber
 \\
  \leq \frac{c_1\theta^{2m}}{nh^{2m+1}} 
 \sum_{j=0}^N \frac{C_{j,m}^2}{h}\int_{I_j(x_0)} f_Y(t) \rd t,
 \label{eq:var-1}
\end{align}
where $I_j(x_0):=[x_0+\theta(2j+m)-h, x_0+\theta(2j+m)+h]$.
Furthermore, by (A.16) in \cite{BG:2019},  
\begin{eqnarray*}
 \frac{1}{h} \int_{I_j(x_0)} f_Y(y)\rd y \leq \frac{c_2}{\theta}\int_{-h}^h f(t+x_0+2(j+m)\theta)\rd t
 +\frac{c_3}{\theta} \int_{-h}^h f(t+x_0+2j\theta)\rd t 
 \\
 + \frac{c_4}{\theta} \int_{-m\theta}^{m\theta} f(t+x_0+(2j+m)\theta)\rd t =: S_{1,j}+S_{2,j}+S_{3,j}.
\end{eqnarray*}
We have 
\begin{align}\label{PRF_1.1}
	&\sum_{j=0}^N C_{j,m}^2 S_{1,j} = \frac{c_2}{\theta}
	\sum_{j=0}^N C_{j,m}^2 \int^h_{-h}f(t+x_0+2(j+m)\theta)\rd t\nonumber\\
	&\;\; \leq c_5\sum_{j=0}^N \frac{j^{2m-2}}{\theta}\int^{x_0+2(j+m)\theta+h}_{x_0+2(j+m)\theta-h}\frac{t^{q}f(t)}{(x_0+2\theta j)^{q}}\rd t
	\leq\frac{c_6B h}{\theta^{q+1}}	\sum_{j=0}^N j^{2m-q-2}, 
\end{align}
where we have used that $C_{j,m}=\tbinom{j+m-1}{m-1}\leq c_0 j^{m-1}$, 
$f\in \sN_q(B)$ and $\theta>h$ for large $n$. The  term 
$\sum_{j=0}^N C_{j,m}^2 S_{2,j}$ is also bounded from above by the same expression as on the right hand side of 
(\ref{PRF_1.1}). Furthermore, 
\begin{align}\label{PRF_1.2}
	&\sum_{j=0}^N C_{j,m}^2 S_{3,j}= c_4\sum_{j=0}^N \frac{C_{j,m}^2}{\theta}\int^{m\theta}_{-m\theta}f(t+x_0+(2j+m)\theta) \rd t\nonumber\\	
	 &\;\;\leq \frac{c_8}{\theta} \sum_{j=0}^N j^{2m-2}\int^{x_0+2(j+m)\theta}_{x_0+2j\theta}\frac{t^{q}f(t)}{(x_0+2\theta j)^{q}}\rd t
	\leq \frac{c_9 B}{\theta^{q}}	\sum_{j=0}^N j^{2m-q-2}. 
\end{align}
Combining (\ref{PRF_1.2}), (\ref{PRF_1.1}) and (\ref{eq:var-1}) we conclude that
\begin{equation}\label{eq:var-2}
 {\rm var}_f\big[\hat{f}^+_{h, N}(x_0)\big]\;\leq\; \frac{c_{10}\theta^{2m-q} B\psi_N}{nh^{2m+1}},\;\;\;
 \;\;\psi_N:=\left\{\begin{array}{ll}
                            1, & q>2m-1,\\
                            \log N, & q=2m-1,\\
                            N^{2m-q-1}, & q<2m-1.
                           \end{array}\right.
\end{equation}
\par 
(b). Now we bound the bias of $\hat{f}^+_{h, N}(x_0)$. It is shown in \cite{BG:2019} that 
\[
 \rE_f \big[\hat{f}^+_{h, N}(x_0)\big]=\frac{1}{h}\int_{-\infty}^\infty K\Big(\frac{t-x_0}{h}\Big) f(t) \rd t + 
 T_N(f;x_0),
\]
where 
\[
 T_N(f; x_0)= \sum_{j=1}^m \tbinom{m}{j} \int_{-1}^1 K(y) f(yh+x_0+2\theta(N+1)j) \rd y.
\]
Taking into account that $f\in \sN_q(B)$ we obtain for any $j=1, \ldots, m$
\begin{align*}
\int_{-1}^1|K(y)|f(yh+x_0+2\theta(N+1)j) dy
\leq \frac{c_{11}}{h}\int_{x_0+2\theta(N+1)j-h}^{x_0+2\theta(N+1)j+h}f(y) dy
\\
 \leq \frac{c_{12} Bh}{h (x_0+2\theta N)^q}  \leq \frac{c_{13}B}{(\theta N)^q}.
\end{align*}	
This leads to the following upper bound on the bias of $\hat{f}_{h, N}(x_0)$:
\begin{align}\label{PRF_1.4}
\Big| \rE_f \big[\hat{f}^+_{h, N}(x_0)\big] - f(x_0)\Big| \leq c_{14} \Big(A h^{\alpha} + \frac{B}{\theta^q N^q}\Big). 	
\end{align}
\par 
(c). We complete the proof by combining the bounds in (\ref{eq:var-2})
and (\ref{PRF_1.4}) for  the cases $q>2m-1$, $q=2m-1$ and $q<2m-1$.
Sraightforward algebra shows that the following choice of $h=h_*$ and $N=N_*$ yields the theorem
result:
\begin{itemize}
 \item[(i)] if $q>2m-1$ then we set  
 \begin{equation}\label{eq:q>}
   h_*=c_1\Big(\frac{B}{A^2n}\Big)^{\frac{1}{2\alpha+2m+1}}, \;\;\; N_*\geq c_2 
 \Big( \frac{B^{\alpha+2m+1} n^\alpha}
 {A^{2m+1}}
 \Big)^{\frac{1}{q(2\alpha+2m+1)}};
 \end{equation}
 \item[(ii)] if $q=2m-1$ then 
\begin{equation}\label{eq:q=}
 h_*= c_3 \Big(\frac{B\log n}{A^2n}\Big)^{\frac{1}{2\alpha+2m+1}}, \;\;\;
  N_*= c_4\bigg\{\frac{B^{\alpha+2m+1}}{A^{2m+1}}\Big(\frac{n}{\log n}\Big)^\alpha \;
  \bigg\}^{\frac{1}{q(2\alpha+2m+1)}};
  \end{equation}
  \item[(iii)] if $q<2m-1$ then 
  \begin{equation}\label{eq:q<}
  h_*= c_5\Big(\frac{B^{(2m-1)/q}}{A^{(2m+q-1)/q}}\frac{1}{n}\Big)^{\frac{1}{2\alpha+2m+1+r}},
\;\;\;N_*= c_6(B/A)^{1/q} h_*^{-\alpha/q},
\end{equation}
\end{itemize}
where constants $c_1, \ldots c_6$ do not depend on $A$ and $B$.
\end{proof}
\subsection{Proof of Theorem~\ref{THM_2}}
\begin{proof}
Without loss of generality we fix $x_0$ to be $0$. The proof is split into a few steps: (i) defines two functions in $\mathscr{W}_{\alpha,q}(A,B)$ and provides their point-wise distance; (ii) bounds the $\chi^2$-divergence between densities of the observations; (iii) specifies the proper tuning parameters 
and provides the rate for the lower bound, and (iv) deals with 
derivation of the lower bound for the light tail regime.
\par 
(i).~For $s>1/2$ define
\begin{align}\label{PRF_2.1}
	f_0(x):=\frac{C(s)}{(1+x^2)^s},\;\;\; x\in\mathbb{R},
\end{align}
where $C(s)$ is a normalizing constant depending on $s$. Then, $f_0\in\mathscr{N}_q(B)$ for $1<q\le 2s$ since
$f_0(x)\le C(s)/x^{2s}\le B/x^q\text{ for }x>1$	with properly chosen $B>0$. In addition, since $f_0$ is infinitely differentiable, $f_0\in\mathscr{H}_\alpha(A)$ for any $\alpha$ with  properly chosen $A$. 
\par 
Define function $\eta_0$ on $\bR$ via its Fourier transform 
$\phi_{\eta_0}(\omega)=\int_{-\infty}^\infty \eta_0(x) e^{-i\omega x}\rd x$ as follows. 
Let  $\phi_{\eta_0}$ be an infinitely differentiable function on $\bR$ with the following properties: 
\begin{itemize}
\item[(a)] $\phi_{\eta_0}$ is supported on $[-1,1]$;
\item[(b)] $\phi_{\eta_0}$ is symmetric, $\phi_{\eta_0}(\omega)=\phi_{\eta_0}(-\omega)$, $\forall \omega\in \bR$; 
\item[(c)] given some fixed $\delta\in (0, 1/8)$,  $\phi_{\eta_0}(\omega)=1$ for $\omega \in [0,1-\delta)$, 
$\phi_{\eta_0}(\omega)=0$ for $\omega\geq 1$, and $\phi_{\eta_0}$ is monotone decreasing on $[1-\delta, 1)$. 
\end{itemize}
Given positive $h$ with $h<\pi/\theta$ and $N\in\mathbb{N}$, define
\begin{align}\label{PRF_2.2}
\phi_{\eta}(\omega):=&\sum^{2N}_{k=N+1}\left\{\phi_{\eta_0}\left(\frac{\omega-\pi k/\theta}{h}\right)+\phi_{\eta_0}\left(\frac{\omega+\pi k/\theta}{h}\right) \right\}.
\end{align}
Note that $\phi_\eta$ is  supported on: 
\begin{align}\label{PRF_2.4}
\bigcup^{2N}_{k=N+1}A_k(h),\;\;\;
A_k(h):=\left[\frac{-\pi k}{\theta}-h,
 \frac{-\pi k}{\theta}+h\right]\cup\left[\frac{\pi k}{\theta}-h,\frac{\pi k}{\theta}+h\right].
\end{align}
Then, define function $\eta$  through the inverse Fourier transform as follows:
\begin{align}\label{PRF_2.5}
\eta(x)=\frac{1}{2\pi} \int_{-\infty}^\infty \phi_{\eta}(\omega)e^{i\omega x}d\omega=2h\eta_0(hx)\sum^{2N}_{k=N+1}\cos\left(\frac{\pi kx}{\theta}\right)\text{ for }x\in\mathbb{R}.
\end{align}
In the subsequent proof the parameters $h$ and $N$ are specified so that 
$h\to 0$ and $N\to \infty$ as $n\to\infty$; thus, we tacitly assume that $N$ is large and $h$ is small 
for large enough 
sample size~$n$.
\par 
Given real numbers $M>0$ and $c_0>0$, define
\begin{align}\label{PRF_2.6}
f_1(x):=f_0(x)+c_0M\eta(x).	
\end{align}
We demonstrate that under appropriate choice of
$c_0$ and $M$. $f_1$ is a probability density from $\sW_{\alpha, q}(A, B)$ for any $h$ and $N$. 
Observe that $\phi_{\eta}(0)=0$ implies $\int^\infty_{-\infty}\eta(x)dx=0$ so that $f_1$ integrates to one. 
Moreover, since $\phi_{\eta_0}$ is infinitely differentiable and compactly supported, 
$\eta_0$ is a rapidly decreasing function, i.e., 
$|\eta_0^{(j)}(x) x^\ell | \leq c_{j,l}$ for any $j, \ell=0,1,2,\ldots$. 
In particular,  for some constant $c_1(s)$ depending on $s$ only one has  
$|\eta_0(x)|\le c_1(s)|x|^{-2s}$ for all $x\in \bR$. 
It follows from  (\ref{PRF_2.5}) that  $|\eta(x)|\le c_2 h^{-2s+1}|x|^{-2s}N$ for $x\in\mathbb{R}$. Therefore choosing 
\begin{equation*}
M=h^{2s-1}N^{-1} 
\end{equation*}
we obtain $c_0M|\eta(x)|\le f_0(x)$ for $c_0$ small enough. Therefore $f_0$ is non--negative, and it is 
a probability density. Moreover, $f_1\in\mathscr{N}_q(B)$ for $q\leq 2s$.
If $\alpha$ is  a positive integer then it follows from  (\ref{PRF_2.5}) that  
\begin{align*}
\left|\eta^{(\alpha)}(x)\right|=\left| 2h\sum_{i=0}^\alpha {\alpha \choose i}h^i\eta_0^{(i)}(xh)\sum_{k=N+1}^{2N}\cos^{(\alpha-i)}(\pi k x/\theta)	   \right|
\le c_2 h\sum_{i=0}^\alpha h^iN^{\alpha-i+1}\le c_3hN^{\alpha+1}. 
\end{align*}
Therefore, we can ensure $f_1\in\mathscr{H}_\alpha(A)$ by selecting $h$ and $N$ so that 
\begin{align}\label{PRF_2.7}
	MhN^{\alpha+1}=h^{2s}N^\alpha\le A.
\end{align}
Thus, 
under (\ref{PRF_2.7}) we have 
$f_0,f_1\in\mathscr{W}_{\alpha,q}(A,B)$. In addition, 
\begin{align}\label{PRF_2.8}
|f_1(0)-f_0(0)|=c_0M\eta(0)=c_0Mh\eta_0(0)N=c_4h^{2s}.
\end{align}
\par
(ii).~Now we derive an upper bound on the   $\chi^2$-divergence between the densities  of observations $f_{Y,0}=g\star f_0$ and $f_{Y,1}=g\star f_1$ that correspond to 
$f_0$ and $f_1$.  Observe the following expression: 
\begin{align*}
\chi^2(f_{Y,1},f_{Y,0}):=\int_{-\infty}^\infty \frac{(f_{Y,1}(x)-f_{Y,0}(x))^2}{f_{Y_0}(x)}dx\stackrel{(\ref{PRF_2.6})}{=}c_0^2M^2\int_{-\infty}^\infty \frac{|(g\star\eta)(x)|^2}{(g\star f_0)(x)}dx.
\end{align*}
Consider the denominator, $g\star f_0$, of the integrand. 
We have 
\begin{align*}
 (g\star f_0)(x)= C(s)\int^{\infty}_{-\infty}\frac{g(y)}{[1+(x-y)^2]^s}dy	\geq
 C(s)\int_{-\infty}^\infty \frac{g(y)}{2^s (1+y^2)^s(1+x^2)^s}\rd y\geq \frac{c_5}{(1+x^2)^s}, 
\end{align*}
where we have used the elementary inequality $1+|x-y|^2\leq 2(1+|x|^2)(1+|y|^2)$, $\forall x, y$.
Then the  $\chi^2$-divergence can be bounded:
\begin{align}\label{PRF_2.10}
\chi^2(f_{Y,1};f_{Y,0})\le c_6M^2\int^\infty_{-\infty}|(g\star\eta)(x)|^2dx+c_7M^2\int^\infty_{-\infty}x^{2s}|(g\star\eta)(x)|^2dx.
\end{align}
\par
Let us handle the second integral on the right-hand side. 
For any positive integer number $s$ we have 
\begin{align}\label{PRF_2.11}
\int^\infty_{-\infty}x^{2s}|(g\star\eta)(x)|^2dx	=\frac{1}{2\pi}\int^\infty_{-\infty}\left|\frac{d^s}{d\omega^s}\phi_g(\omega)\phi_{\eta}(\omega)\right|^2d\omega.
\end{align}
Note that 
\begin{align*}
\frac{d^s}{d\omega^s}\phi_g(\omega)\phi_{\eta}(\omega)=&\sum_{j=0}^s{s \choose j}\phi_g^{(j)}(\omega)\phi_{\eta}^{(s-j)}(\omega)\\
=&	\sum_{j=0}^s{s \choose j}\frac{\phi_g^{(j)}(\omega)}{h^{s-j}}\sum_{k=N+1}^{2N}\left\{\phi_{\eta_0}^{(s-j)}\left(\frac{\omega-\pi k/\theta}{h}\right) + \phi_{\eta_0}^{(s-j)}\left(\frac{\omega+\pi k/\theta}{h}\right)\right\}.
\end{align*}
Furthermore, $\phi_g^{(j)}$ can be expanded by Fa\'{a} di Bruno formula for $j\in\mathbb{N}$: 
if $\phi_{g_0}(\omega):= \sin(\theta \omega)/(\theta\omega)$ then $\phi_g(\omega)=[\phi_{g_0}(\omega)]^m$ and 
\begin{align*}
\phi_g^{(j)}(\omega)=\frac{d^j}{d\omega^j}\left(\frac{\sin\theta\omega}{\theta\omega}\right)^m
=\sum_{l=1}^j j\cdots (j-l+1)\left(\frac{\sin\theta\omega}{\theta\omega} \right)^{m-l}B_{j,l}\Big(
\phi_{g_0}^\prime(\omega),\ldots,\phi_{g_0}^{(j-l+1)}(\omega) \Big),
\end{align*}
where $B_{j,l}$  denotes the Bell polynomials.  
Recall that $B_{j,l}$ is a homogeneous polynomial in 
$j$ variables of degree $l$, and note that $|\phi_{g_0}^{(j)}(\omega)|\le c_{8}(|\omega|^{-1}\wedge 1)$, $\forall j$. 
Then,
\begin{align}\label{eq:phi-g-j}
\left|\phi_{g}^{(j)}(\omega)\right|\le 
c_{9}\sum_{l=1}^j\left|\frac{\sin\theta\omega}{\theta\omega}\right|^{m-l}|\theta\omega|^{-l}
=\frac{c_{9}}{|\theta\omega|^m}\sum_{l=1}^j|\sin\theta\omega|^{m-l}.	
\end{align}
Combining the above results and the fact that sets $A_k(h)$ in (\ref{PRF_2.2}) are disjoint for  $k=N+1,\ldots,2N$, 
we bound the integral in (\ref{PRF_2.11}) as follows:
\begin{align*}
	&\int^\infty_{-\infty}\left|\sum_{j=0}^s{s \choose j}\frac{\phi_g^{(j)}(\omega)}{h^{s-j}}\sum_{k=N+1}^{2N}\left\{\phi_{\eta_0}^{(s-j)}\left(\frac{\omega-\pi k/\theta}{h}\right) + \phi_{\eta_0}^{(s-j)}\left(\frac{\omega+\pi k/\theta}{h}\right)\right\} \right|^2 \rd \omega\\
	\le& c_{10}h^{-2s}\sum_{k=N+1}^{2N}\int_{A_k(h)}\left|\sum_{j=0}^s h^j\phi_g^{(j)}(\omega)\right|^2
	\rd\omega\\
	\le& c_{11}h^{-2s} \sum_{k=N+1}^{2N}\int_{A_k(h)}\left(\left|\frac{\sin\theta\omega}{\theta\omega}\right|^{2m}+\frac{1}{|\theta\omega|^{2m}}\sum_{j=1}^sh^{2j}\sum_{l=1}^j|\sin\theta\omega|^{2m-2l} \right)\rd\omega\\
	\le& c_{12}h^{2m+1-2s}\sum_{k=N+1}^{2N}\frac{1}{k^{2m}}=c_{13}h^{2m-2s+1}N^{-2m+1}.
\end{align*}
In addition, the first integral on the left-hand side in (\ref{PRF_2.10}) can be bounded with $s=0$, so that
\begin{align*}
\int^\infty_{-\infty}|(g\star\eta)(x)|^2 \rd x\le 	c_{14}h^{2m+1}N^{-2m+1}.
\end{align*}
Therefore, for positive integer $s$,
\begin{align}\label{PRF_2.13}
\chi^2(f_{Y,1};f_{Y,0})\le c_{14}M^2h^{2m+1}N^{-2m+1}+ c_{13}M^2h^{2m-2s+1}N^{-2m+1}\nonumber\\
\le c_{15}h^{2m+2s-1}N^{-2m-1}.
\end{align}
The same upper bound holds for any non-integer $s\geq 0$; this fact  is due to the interpolation inequality for the Sobolev spaces, see, e.g., \citet{Au:2011} for the details. 
\par 
(iii).~Now, based on (\ref{PRF_2.7}) and (\ref{PRF_2.13}), we  specify parameters $h=h_*$ and 
$N=N_*$ as follows:
\begin{align*}
N_*:=\left(\frac{A}{h_*^{2s}}\right)^{1/\alpha},\;\;\;\;\;\;
h_*:=\left(\frac{A^\frac{2m+1}{\alpha}}{n}\right)^\frac{\alpha}{(2m+2s-1)\alpha+2s(2m+1)}. 	
\end{align*}
Under this choice (\ref{PRF_2.7}) holds, and  $\chi^2(f_{Y,1},f_{Y,0})\le c_{15}/n$.
Then the lower bound on the minimax risk is  obtained by plugging these expressions 
in  (\ref{PRF_2.8}) 
and letting $2s=q>1$:
\begin{align}\label{eq:LB-1}
\mathcal{R}^*_n[\mathscr{W}_{\alpha,q}(A,B)]\ge c_4
\left(\frac{A^\frac{2m+1}{\alpha}}{n}\right)^\frac{\alpha}{2\alpha+2m+1 + (\alpha/q)(2m-1-q)}.
\end{align}
\par 
(iv).~To complete 
the proof of the theorem it remains to observe that in the considered problem the following  standard lower bound 
on the minimax risk
can be also established:
\begin{align}\label{eq:LB-2}
\mathcal{R}^*_n[\mathscr{W}_{\alpha,q}(A,B)]\ge c_4
\left(\frac{A^\frac{2m+1}{\alpha}}{n}\right)^\frac{\alpha}{2\alpha+2m+1}.
\end{align}
For completeness, we provide the proof sketch.  Let $f_0$ be given by 
(\ref{PRF_2.1}), and let $\eta$ be the function defined via its Fourier transform $\phi_\eta$ as follows
\[
 \phi_\eta(\omega)= \phi_{\eta_0}(2\omega h-3) + \phi_{\eta_0}(2\omega h +3),
\]
where $\phi_{\eta_0}$ is a function with properties (a)--(c). Obviously, $\phi_\eta$ is symmetric, supported 
on $[-2/h,-1/h]\cup [1/h,2/h]$, and 
\[
\eta(x)= \frac{1}{2\pi} \int_{-\infty}^\infty \big[\phi_{\eta_0}(2\omega h-3) + \phi_{\eta_0}(2\omega h+3)\big]
e^{i\omega x}
\rd \omega = \frac{2}{h} \eta_0\Big(\frac{x}{2h}\Big)\cos\Big(\frac{3x}{2}\Big).
\]
The function $f_1$ is defined by 
(\ref{PRF_2.6}), and the choice $M=Ah^{\alpha+1}$ and properties of function $\eta_0$ guarantee that
$f_1$ is a density from the class $\sW_{\alpha, q}(A, B)$ with $q\leq 2s$. With this construction 
$|f_0(0)-f_1(0)|=c_0M\eta(0)=c_{16} Ah^\alpha$.
The upper bound on the 
$\chi^2$--divergence between $f_{Y,0}$ and $f_{Y, 1}$ is  computed along the same lines as above with the following 
modifications. Now we apply (\ref{eq:phi-g-j}) to get
\begin{align*}
\Big|\frac{d^s}{d\omega^s}\phi_g(\omega)\phi_{\eta}(\omega) \Big|\leq &
\sum_{j=0}^s {s \choose j}\Big|\phi_g^{(j)}(\omega)\phi_{\eta}^{(s-j)}(\omega)\Big| \leq 	
c_{17} |\theta \omega|^{-m} \sum_{j=0}^s |\phi_{\eta}^{(s-j)}(\omega)|,
\end{align*}
and, by properties of function $\phi_\eta$,
\[
\int^\infty_{-\infty}x^{2s}|(g\star\eta)(x)|^2\rd x \leq c_{18}\int_{1/h}^{2/h} |\omega|^{-2m}
\rd \omega =c_{19}h^{2m-1}. 
\]
The same upper bound holds for the integral $\int^\infty_{-\infty}|(g\star\eta)(x)|^2\rd x$ which leads to 
\[
\chi^2(f_{Y,1}; f_{Y,0})\leq c_{20} M^2 h^{2m-1}= c_{20} A^2 h^{2\alpha+2m+1}. 
\]
Then (\ref{eq:LB-2}) follows from the choice $h_*= (A^2 n)^{-1/(2\alpha+2m+1)}$.
\par 
Combining (\ref{eq:LB-1}) and (\ref{eq:LB-2}) and noting that 
the following relation holds  for $1<q<2m-1$ 
\begin{align*}
\frac{\alpha}{2\alpha+2m+1 + (\alpha/q)(2m-1-q)} \le \frac{\alpha}{2\alpha+2m+1},
\end{align*}
we complete the proof. 
\end{proof}

\subsection{Proof of Corollary~\ref{THM_3}}
\begin{proof} The upper bound (\ref{eq:upper-holder}) is obtained directly from Theorem~\ref{THM_1}
applied with $q=1$. We need to establish (\ref{eq:lower-holder}) only. 
The proof goes along the lines of the proof of
Theorem~\ref{THM_2} with minor modifications that are indicated below.
\par 
Define 
\begin{align*}
f_0(x):=\frac{h}{\pi(1+h^2x^2)},\;\;\;x\in\mathbb{R},
\end{align*}
where $h>0$ is a parameter to be specified.
Obviously, $f_0\in\mathscr{H}_\alpha(A)$ for small enough $h$. 
Using the function $\eta$ defined in (\ref{PRF_2.2}), (\ref{PRF_2.4}), and (\ref{PRF_2.5}), let
\begin{align*}
	f_1(x):=f_0(x)+c_0M\eta(x)\text{ for }x\in\mathbb{R}.	
\end{align*}
Similarly to the proof of Theorem~\ref{THM_2}, $|\eta(x)|\leq c_1h^{-1}N|x|^{-2}$.
Set $M:=N^{-1}$, so that $c_0M|\eta(x)|=c_0c_1/(h|x|^2)\le f_0(x)$ holds for sufficiently small $c_0$.
Since we use the same function $\eta$ in Theorem~\ref{THM_2}, we can ensure $f_1\in\mathscr{H}_\alpha(A)$ by setting 
\begin{align}\label{PRF_3.1}
MhN^{\alpha+1}=hN^\alpha\le A.
\end{align}
Therefore, for $x_0=0$, we have the following point-wise distance
\begin{align*}
|f_1(0)-f_0(0)|=c_1M\eta(0)=c_1Mh\eta_0(0)N=c_2h.
\end{align*}
The bound on the $\chi^2$--divergence takes the following form   
\begin{align}\label{PRF_3.2}
\chi^2(f_{Y,1};f_{Y,0})\le& c_3\frac{M^2}{h}\int^\infty_{-\infty}|(g\star\eta)(x)|^2dx+c_4M^2h\int^\infty_{-\infty}x^{2}|(g\star\eta)(x)|^2dx\nonumber\\
\le& c_5(M^2/h)h^{2m+1}N^{-2m+1}+ c_6(M^2h)h^{2m-1}N^{-2m+1}
\le c_7h^{2m}N^{-2m-1}.
\end{align}
Based on (\ref{PRF_3.1}) and (\ref{PRF_3.2}), we choose $h=h_*$ and $N=N_*$ as follows:
\begin{align*}
N_*:=(A/h_*)^{1/\alpha},\;\;\;	h_*:=(A^{2m+1}/n^\alpha)^\frac{1}{2m\alpha+2m+1}
\end{align*}
which leads to the announced result. 
\end{proof}

\subsection{Proof of Theorem~\ref{th:oracle-ineq}}
\begin{proof}
(I). The error of estimator $\hat{f}^+_\tau(x_0)$ is 
\[
 |\hat{f}^+_\tau (x_0)- f(x_0)|\leq |B_\tau(x_0; f)| + |\xi_\tau (x_0)|,
\]
where  $B_\tau(x_0; f)$ is the bias term, and $\xi_\tau (x_0)$ is the stochastic error given by  (\ref{eq:xi}).
The bias term is expressed as follows (see the proof of Theorem~\ref{THM_1}):
\begin{multline}
  B_\tau(x_0; f) := 
 \rE_f \big[\hat{f}^+_{h, N}(x_0)\big]- f(x_0) 
 \\
 = \frac{1}{h}\int_{-\infty}^\infty K\Big(\frac{t-x_0}{h}\Big) [f(t)-f(x_0)] \rd t 
+ 
 \sum_{j=1}^m \tbinom{m}{j} (-1)^j\int_{-1}^1 K(y) f(yh+x_0+2\theta(N+1)j) \rd y
 \\
 = \sum_{j=0}^m \tbinom{m}{j} (-1)^j\int_{-1}^1
 K(y)\Big[f(yh+x_0+2\theta (N+1)j)- f(x_0+2\theta(N+1)j)\Big] \rd y
 \\
 +\sum_{j=1}^m \tbinom{m}{j} (-1)^jf(x_0+2\theta(N+1)j).
\nonumber
\end{multline}
Therefore by definitions of $\bar{B}_h(f)$ and $\bar{B}_N(x_0;f)$  [see (\ref{eq:barB-h}), (\ref{eq:barB-N})]
we have
\[
 |B_\tau(x_0;f)| \leq 2^m \bar{B}_h(f) + 2^m \bar{B}_N(x_0; f) \leq \bar{B}_\tau(x_0;f),
\]
where $\bar{B}_\tau(x_0;f)$ is defined in (\ref{eq:bias-UB}).
\par 
(II). 
Now we demonstrate that  
\[ 
|B_{\tau \vw \tau^\prime}(x_0;f) - B_{\tau^\prime}(x_0;f)|\leq \bar{B}_\tau(x_0;f),\;\;\;\forall 
\tau, \tau^\prime \in \cT.
\]
For this purpose  denote 
\begin{align*}
 S_h(x):=\frac{1}{h}\int_{-\infty}^\infty K\Big(\frac{t-x}{h}\Big) [f(t)-f(x)] \rd t
 \\
 T_N(x):= \sum_{j=1}^m \tbinom{m}{j} (-1)^j f(x+2\theta(N+1)j)
\end{align*}
and write 
\begin{equation}\label{eq:bias-decomposition}
 B_\tau(x_0; f) =  S_h(x_0)  + T_N(x_0)+\sum_{j=1}^m \tbinom{m}{j} (-1)^j S_h(x_0+2\theta(N+1)j).
\end{equation} 
In view of (\ref{eq:bias-decomposition}) for any pair $\tau=(h, N)$, $\tau^\prime=(h^\prime, N^\prime)$ 
we have 
\begin{align}
& B_{\tau \vw \tau^\prime}(x_0;f) - B_{\tau^\prime}(x_0;f)  = 
 \big[S_{h\vee h^\prime} (x_0) - S_{h^\prime}(x_0)\big]  + \big[T_{N\wedge N^\prime}(x_0) - 
 T_{N^\prime} (x_0)\big]
\nonumber
 \\
&\;\;\;  + 
 \sum_{j=1}^m \tbinom{m}{j}(-1)^j \Big[ S_{h\vee h^\prime} (x_0+2\theta(N\wedge N^\prime +1)j) -
 S_{h^\prime} (x_0+2\theta(N^\prime +1)j)\Big].
\label{eq:B-tau}
 \end{align}
We consider the three terms on the right hand side of (\ref{eq:B-tau}):
\begin{align}
 \sup_{h^\prime \in \cH} \big|S_{h\vee h^\prime} (x_0) - S_{h^\prime}(x_0)\big|=
 \sup_{h^\prime\leq h} \big[S_{h\vee h^\prime} (x_0) - S_{h^\prime}(x_0)\big| 
 \nonumber
 \\ 
 \leq 
 \big|S_h(x_0)\big| + \sup_{h^\prime \leq h}\big|S_{h^\prime}(x_0)\big| \leq 
 2\sup_{h^\prime \leq h}\big|S_{h^\prime}(x_0)\big|,
\label{eq:B-1}
 \end{align}
 and similarly
\begin{equation}\label{eq:B-2}
\sup_{N^\prime \in \cN}\big|T_{N\wedge N^\prime}(x_0) - T_{N^\prime} (x_0)\big|
 \leq 2 \sup_{N^\prime\geq  N}\sum_{j=1}^m \tbinom{m}{j}  f(x_0+2\theta (N^\prime +1)j).
\end{equation}
Furthermore 
\begin{align}
&\sup_{h^\prime, N^\prime}  \big|S_{h\vee h^\prime} (x_0+2\theta(N\wedge N^\prime +1)j) -
 S_{h^\prime} (x_0+2\theta(N^\prime +1)j)\big|
\nonumber
 \\
&\;\leq \sup_{h^\prime, N^\prime} \big|S_{h\vee h^\prime} (x_0+2\theta(N\wedge N^\prime +1)j) -
 S_{h^\prime} (x_0+2\theta(N^\prime\wedge N +1)j)\big|
\nonumber
 \\
&\;\;\;\;\;\;\;+
 \sup_{h^\prime, N^\prime} \big|S_{h^\prime} (x_0+2\theta(N\wedge N^\prime +1)j) -
 S_{h^\prime} (x_0+2\theta(N^\prime +1)j)\big|
\nonumber
 \\
&\;\leq  2\sup_{h^\prime \leq h} \big\| S_{h^\prime}\big\|_\infty + 
 2 \sup_{h^\prime\in \cH} \sup_{N^\prime\geq N} \big| S_{h^\prime} (x_0+2\theta(N^\prime +1)j)\big|
\;\leq\; 2\sup_{h^\prime \leq h} \big\| S_{h^\prime}\big\|_\infty 
\nonumber
\\
&\;\;\;\;\;+ 
2\sup_{h\in \cH}\sup_{N^\prime\geq N}
\bigg|\int_{-1}^1 K(y) f(yh +x_0+2\theta(N^\prime+1)j) \rd y\bigg|
 + 2\sup_{N^\prime\geq  N} 
f(x_0+2\theta(N^\prime +1)j)
\nonumber
\\ 
&\;\leq  2\sup_{h^\prime \leq h} \big\| S_{h^\prime}\big\|_\infty  + 2 (1+\|K\|_1) \sup_{|t|\leq \theta}
\sup_{N^\prime \geq N} f(t+x_0+2\theta (N^\prime+1)j).
 \label{eq:B-3}
 \end{align}
Combining (\ref{eq:B-1})--(\ref{eq:B-3}) with (\ref{eq:B-tau}) we obtain 
\begin{align}
 &\sup_{\tau^\prime\in \cT}  \big|B_{\tau\vw\tau^\prime}(x_0;f)- B_{\tau^\prime}(x_0; f)\big| 
\nonumber
 \\
 &\;\;\leq  \;2^{m+1} \sup_{h^\prime \leq h}  \big\|S_{h^\prime}\big\|_\infty 
 + 2^{m+1}(1+\|K\|_1) \max_{1\leq j\leq m} \sup_{|t|\leq \theta}
\sup_{N^\prime \geq N} f(t+x_0+2\theta (N^\prime+1)j) 
\nonumber
\\
&\;\;= 2^{m+1} \bar{B}_h(f) +
2^{m+1}(1+\|K\|_1) \bar{B}_N(x_0; f) \leq \bar{B}_\tau(x_0;f),
\label{eq:bar-B}
\end{align}
where $\bar{B}_h(f)$, $\bar{B}_N(x_0; f)$ and $\bar{B}_\tau(x_0;f)$ are defined in (\ref{eq:barB-h}),
(\ref{eq:barB-N}), and  (\ref{eq:bias-UB}) respectively.
\par 
(III).  
Let $\hat{\tau}=(\hat{h}, \hat{N})$ be the parameter selected by the rule 
(\ref{EQN_4.4})--(\ref{EQN_4.5}). For any $\tau\in \cT$ we have by the triangle inequality
\begin{equation}\label{eq:decomposition}
 |\hat{f}_{\hat{\tau}}^+(x_0) - f(x_0)| \leq |\hat{f}^+_{\hat{\tau}} (x_0)- 
 \hat{f}^+_{\hat{\tau} \vw \tau}(x_0)| +  |\hat{f}^+_{\tau\vw \hat{\tau}}(x_0) - \hat{f}_{\tau}^+(x_0)|+
 |\hat{f}_\tau^+(x_0)- f(x_0)|.
\end{equation}
Now we bound the terms on the right hand side separately. 
\par 
We begin with the following simple observation: it follows from (\ref{EQN_4.4}) that 
\begin{align*}
& \hat{R}_\tau(x_0) - \hat{\Lambda}_\tau(\kappa)
- \sup_{\tau^\prime\in \cT}\hat{\Lambda}_{\tau\vw \tau^\prime}(\kappa) =
\sup_{\tau^\prime \in \cT}
\Big[ \big|\hat{f}^+_{\tau\vw\tau^\prime}(x_0)- \hat{f}_{\tau^\prime}^+(x_0)\big| - \hat{\Lambda}_{\tau\vw\tau^\prime}(\kappa)-
\hat{\Lambda}_{\tau^\prime}(\kappa)\Big]_+
\\
&\;\;\leq \sup_{\tau^\prime\in \cT} \big|B_{\tau \vw\tau^\prime}(x_0; f)- B_{\tau^\prime}(x_0;f)\big| 
+ \sup_{\tau^\prime\in \cT}\Big[ |\xi_{\tau\vw\tau^\prime}(x_0)- \xi_{\tau^\prime}(x_0)\big| -
\hat{\Lambda}_{\tau\vw\tau^\prime}(\kappa) - \hat{\Lambda}_{\tau^\prime}(\kappa)\Big]_+.
\end{align*}
Hence by (\ref{eq:bar-B}) 
\begin{equation}\label{eq:R-hat}
 \hat{R}_\tau(x_0) \leq \bar{B}_\tau (x_0; f) + 2 \hat{\zeta}(x_0) + \hat{\Lambda}_\tau(\kappa)
+ \sup_{\tau^\prime\in \cT}\hat{\Lambda}_{\tau\vw \tau^\prime}(\kappa),
\end{equation}
where 
\[ 
\hat{\zeta}(x_0):= \sup_{\tau\in \cT} \big[|\xi_\tau(x_0)| - \hat{\Lambda}_\tau(\kappa)\big]_+~. 
\]
Therefore for any $\tau, \tau^\prime\in \cT$
\begin{align*}
 &\big|\hat{f}^+_{\tau\vw\tau^\prime}(x_0)- \hat{f}^+_{\tau^\prime}(x_0)\big| \leq 
 \big|B_{\tau\vw\tau^\prime}(x_0;f)- B_{\tau^\prime}(x_0;f)\big| + \big|\xi_{\tau\vw \tau^\prime}(x_0)-
 \xi_{\tau^\prime}(x_0) \big|
 \\
 &\;\;\;\;\;\;
 \leq  \bar{B}_\tau(x_0;f) + 2\hat{\zeta}(x_0) + \hat{\Lambda}_{\tau\vw\tau^\prime}(\kappa) + 
 \hat{\Lambda}_{\tau^\prime}(\kappa) \leq \bar{B}_{\tau}(x_0;f)+2\hat{\zeta}(x_0)+ 
 \hat{R}_{\tau^\prime}(x_0),
\end{align*}
where the last inequality follows from the definition of $\hat{R}_\tau(x_0)$.
This inequality together with (\ref{eq:R-hat}) imply the following bound on the first term on the right hand side of (\ref{eq:decomposition}):
\begin{align}
 |\hat{f}^+_{\hat{\tau} \vw \tau}(x_0)-\hat{f}^+_{\hat{\tau}} (x_0)| \leq \bar{B}_\tau(x_0;f)
 +2 \hat{\zeta}(x_0) + \hat{R}_{\hat{\tau}}(x_0) \leq \bar{B}_\tau(x_0;f)
 +2 \hat{\zeta}(x_0) + \hat{R}_{\tau}(x_0)
 \nonumber
 \\
 \leq 2\bar{B}_\tau(x_0; f) + 4\hat{\zeta}(x_0)+ \hat{\Lambda}_\tau(\kappa)+ \sup_{\tau^\prime}
 \hat{\Lambda}_{\tau\vw \tau^\prime}(\kappa),
 \label{eq:first-term}
\end{align}
where in the penultimate inequality we have used that $\hat{R}_{\hat{\tau}}(x_0)\leq \hat{R}_\tau(x_0)$
for any $\tau\in \cT$.
\par 
We proceed with bounding the second term on the right hand side of (\ref{eq:decomposition}):
by definition of $\hat{R}_{\hat{\tau}}(x_0)$ we have 
\begin{align}
 |\hat{f}^+_{\tau\vw \hat{\tau}}(x_0)- \hat{f}^+_{\tau}(x_0)| \pm [\hat{\Lambda}_{\tau\vw\hat{\tau}}(\kappa)
 + \hat{\Lambda}_{\tau}(\kappa)] \leq \hat{R}_{\hat{\tau}}(x_0) + \sup_{\tau^\prime\in \cT}
 \hat{\Lambda}_{\tau\vw \tau^\prime}(\kappa)
 + \hat{\Lambda}_{\tau}(\kappa)
 \nonumber 
 \\
 \leq \hat{R}_\tau(x_0)+\sup_{\tau^\prime\in \cT}
 \hat{\Lambda}_{\tau\vw \tau^\prime}(\kappa)
 + \hat{\Lambda}_{\tau}(\kappa) 
 \nonumber
 \\
 \leq 
 \bar{B}_\tau(x_0; f)+ 2\hat{\zeta}(x_0) + 2\sup_{\tau^\prime\in \cT}
 \hat{\Lambda}_{\tau\vw \tau^\prime}(\kappa)
 + 2\hat{\Lambda}_{\tau}(\kappa).
 \label{eq:second-term}
\end{align}
\par 
Finally 
\begin{equation}\label{eq:third-term}
 |\hat{f}_\tau^+(x_0) - f(x_0)| \leq |B_\tau(x_0; f)| + |\xi_\tau (x_0)|
 \leq \bar{B}_\tau(x_0;f) + \Lambda_\tau(\kappa) + \zeta(x_0),
\end{equation}
where we recall that 
\[
\zeta(x_0):=\sup_{\tau\in \cT} \Big[|\xi_\tau(x_0)| - \Lambda_\tau(\kappa)\Big]_+.
\]
Combining (\ref{eq:first-term}), (\ref{eq:second-term}), (\ref{eq:third-term})
and (\ref{eq:decomposition}) we obtain 
\begin{align*}
 \big|\hat{f}^+_{\hat{\tau}}(x_0)- f(x_0)\big| \leq \inf_{\tau\in \cT}
 \Big\{ 4\bar{B}_\tau(x_0;f) + 3\hat{\Lambda}_\tau(\kappa) + 3 \sup_{\tau^\prime\in \cT}
 \hat{\Lambda}_{\tau \vw \tau^\prime}(\kappa) + \Lambda_\tau(\kappa) \Big\}
 \\
 \;+\; 6\hat{\zeta}(x_0)+ \zeta(x_0).
\end{align*}
\par 
(IV). We complete the proof using Lemmas~\ref{lem:Lambda-Lambda-tilde} and~\ref{lem:xi} in Appendix.
Observing that $\hat{\Lambda}_\tau(\kappa)=7\tilde{\Lambda}_\tau(\kappa)$ and applying the first inequality in 
(\ref{eq:LL}) we have 
\begin{align*}
 \hat{\zeta}(x_0) \leq \zeta(x_0) + \sup_{\tau\in \cT} \big[\Lambda_\tau(\kappa)- 7\tilde{\Lambda}_\tau(\kappa)\big]_+ \leq \zeta(x_0) +2c\eta(x_0),
\end{align*}
where $c=2^{-m-2}\theta \|K^{(m)}\|_\infty^{-1}$ [cf.~Lemma~\ref{lem:Lambda-Lambda-tilde}].
Then using the second inequality in (\ref{eq:LL}) in order to bound $\hat{\Lambda}_\tau(\kappa)$ and 
$\sup_{\tau^\prime\in \cT} \hat{\Lambda}_{\tau \vw \tau^\prime}(\kappa)$ in terms of $\Lambda_\tau(\kappa)$ 
we obtain
\begin{align*}
 \big|\hat{f}^+_{\hat{\tau}}(x_0)- f(x_0)\big| \leq \inf_{\tau\in \cT}
 \Big\{ 4\bar{B}_\tau(x_0;f) + 127 \Lambda_\tau(\kappa) + 126 \sup_{\tau^\prime\in \cT}
 \Lambda_{\tau \vw \tau^\prime}(\kappa) \Big\}
 \\
 \;+ 7\zeta(x_0) + (42+12c)\eta(x_0) + \frac{42\kappa}{n}.
\end{align*}
By definition of the opeartion $\vw$ and by definition of $\sigma_\tau^2$ and $u_\tau$ 
[see  (\ref{eq:sigma-tau}) and 
(\ref{eq:eta-u})] we have that $\sigma^2_{\tau\vw\tau^\prime} \leq \sigma^2_\tau$ and 
$u_{\tau\vw\tau^\prime} \leq u_\tau$ for any $\tau, \tau^\prime \in \cT$; therefore
$\sup_{\tau^\prime\in \cT} \Lambda_{\tau\vw \tau^\prime}(\kappa) \leq 
 \Lambda_\tau(\kappa)$ for all $\tau\in \cT$.
We complete the proof by setting $\delta(x_0)=\zeta(x_0)+\eta(x_0)$ and using Lemma~\ref{lem:xi}. 
\end{proof}

\subsection{Proof of Corollary~\ref{cor:2}}
\begin{proof}
Below  $c_1, c_2, \ldots$
stand for positive constants independent of $n$, $A$ and $B$.
The proof goes along the following lines. We select values of $h$ and $N$ from $\cH\times \cN$ and apply 
the oracle inequality of Theorem~\ref{th:oracle-ineq}.
\par 
The proof of Theorem~\ref{THM_1} shows that if $f\in \sW_{\alpha, q}(A, B)$ then 
 \[
  \bar{B}_h(f) \leq c_1 Ah^{\alpha},\;\;\;\bar{B}_N(x_0;f)\leq c_2 B\theta^{-q} N^{-q}.
 \]
 Furthermore, by (\ref{eq:var-2})
\begin{equation*} 
 \sigma_\tau^2 \;\leq\; \frac{c_{3}\theta^{2m-q} B\psi_N}{h^{2m+1}},\;\;\;
 \;\;\psi_N:=\left\{\begin{array}{ll}
                            1, & q>2m-1,\\
                            \log N, & q=2m-1,\\
                            N^{2m-q-1}, & q<2m-1.
                           \end{array}\right.
\end{equation*}
In addition, with $\kappa_*=\kappa_0\log n$
we have
\begin{equation}\label{eq:Lambda*}
 \Lambda_\tau(\kappa_*) \leq c_5 \bigg( \frac{B^{1/2}\psi_N^{1/2}}{h^{m+1/2}} \sqrt{\frac{\kappa_0\log n}{n}} + 
\frac{N^{m-1}}{h^{m+1}} \frac{\kappa_0\log n}{n} \bigg).
\end{equation}
First we note that for all $h_{\min}\leq h\leq h_{\max}$ and $N\leq N_{\max}$
and all sufficiently large $n$
\[
 \Lambda_\tau(\kappa_*) \leq c_6 \frac{B^{1/2}\psi_N^{1/2}}{h^{m+1/2}}\sqrt{\frac{\kappa_0\log n}{n}}.
\]
Indeed, this inequality follows from (\ref{eq:Lambda*})   because by  the choice of 
 $h_{\min}$ and 
$N_{\max}$ for large $n$ one has 
\[
 h_{\min} \Big(\frac{n}{\log n}\Big) =\Big(\frac{n}{\log n}\Big)^{2m/(2m+1)} \geq N_{\max}^{2m-2} =
 \Big(\frac{n}{\log n}\Big)^{(2m-2)/(2m)}.
\]
Thus, using  (\ref{eq:oracle}) we have 
\[
 |\hat{f}_*(x_0) - f(x_0)|\leq c_7\inf_{(h, N)\in \cH\times \cN} 
 \bigg\{Ah^{\alpha} + \frac{B}{\theta^{q} N^{q}} + \frac{B^{1/2}\psi_N^{1/2}}{h^{m+1/2}}
 \sqrt{\frac{\log n}{n}}\,\bigg\} + c_8 \bigg(\delta(x_0)+ \frac{\kappa_0\log n}{n}\bigg).
\]
\par 
Now we set  $h_*$ and $N_*$ to be defined by formulas (\ref{eq:q<}), (\ref{eq:q=}) and (\ref{eq:q>}) with 
$n$ replaced by~$n/\log n$. Note that these values of $h$ and $N$ balance the bias and stochastic error
bounds on the right hand side of the previous display formula [for details see the proof of Theorem~\ref{THM_1}].
We need to verify that $h_*$ and $N_*$ satisfy $h_*\geq h_{\min}$ and $N_*\leq N_{\max}$ for large $n$. 
The first inequality is evident because 
$1/(2\alpha+2m+1)\geq 1/(2m+1)$ for all $\alpha>0$. To check the inequality $N_*\leq N_{\max}$
we note that 
$N_*= O\big((n/\log n)^{\frac{\alpha}{q(2m+2\alpha+1+r)}}\big)$ in the case $1\leq q<2m-1$
and 
\[
 \frac{\alpha}{q(2m+2\alpha+1+r)}=
 \frac{\alpha}{\alpha (2m-1+q)+q(2m+1)}\leq \frac{1}{2m}
\]
for all $\alpha>0$. If $q>2m-1$ then $N_*= O\big((n/\log n)^{\frac{\alpha}{q(2m+2\alpha+1)}}\big)$, and 
\[
 \frac{\alpha}{q(2m+2\alpha+1)} \leq \frac{\alpha}{(2m-1)(2m+2\alpha+1)}\leq \frac{1}{4m-2},\;\;\;\forall 
 \alpha >0.
\]
Thus, we always have $N_*\leq N_{\max}$ for large $n$.  
The inequalities   $h_*\geq h_{\min}$ and 
$N_*\leq N_{\max}$ imply that sets $\cH$ and $\cN$ contain  
elements that bound  $h_*$ and $N_*$ from below and from above within constant factors. This  yields
\[
 |\hat{f}_*(x_0) - f(x_0)|\leq c_9 \varphi(n/\log n) + c_8 \bigg(\delta(x_0)+ \frac{\kappa_0\log n}{n}\bigg),
\]
where function $\varphi(\cdot)$ is defined in (\ref{eq:varphi}). 
\par 
To complete the proof we note that $M_h=O(\log_2 n)$, $M_N=O(n^{1/(2m)})$, and 
\[
 \bar{\Lambda}(\kappa_*)\leq  c_{10} \frac{N_{\max}^{m-1}}{h_{\min}^{m+1/2}}\sqrt{\frac{\log n}{n}}
 \bigg(1+\frac{N_{\max}^{m-1}}{h_{\min}^{m+1}}\bigg) \leq c_{11}\Big(\frac{ n}{\log n}\Big)^{3/2},
\]
so that if $\kappa_0\geq 5$ then in view of (\ref{eq:delta}) for large $n$
\[
 \rE_f [\delta(x_0)]^2 \leq c_{12} (\log_2 n)n^{1/2m} \Big(\frac{n}{\log n}\Big)^3 e^{-\kappa_0\log n}
 \leq c_{13}n^{-1}. 
\]
This completes the proof.
\end{proof}

\subsection{Auxiliary Results}
Denote
\[
 L^+_{\tau}(y):= \frac{(2\theta)^m}{h^{m+1}} \sum_{j=0}^N C_{j,m} K^{(m)}\bigg(\frac{y-x_0-\theta(2j+m)}{h}\bigg).
\]
Then
\[
 {\rm var}_f[\hat{f}^+_\tau(x_0)] = \rE_f [\xi_\tau(x_0)]^2,\;\;\;\;\xi_\tau(x_0):=\frac{1}{n} \sum_{i=1}^n 
 \big[L^+_\tau(Y_i)- \rE_f L^+_\tau(Y_i)\big].
\]
\par 
Let 
\begin{eqnarray}
 \zeta(x_0) &:=& \sup_{\tau \in \cT} \big[|\xi_{\tau}(x_0)| - \Lambda_\tau(\kappa)\big]_+ 
 \label{eq:zeta-1}
 \\
 \eta(x_0) &:=& \sup_{\tau\in \cT} \big[ |\hat{\sigma}^2_\tau - \sigma^2_\tau| - u_\tau \Lambda_\tau(\kappa)\big]_+.
 \label{eq:zeta-2}
\end{eqnarray}

\begin{lemma}\label{lem:xi}
 For any $p\geq 1$ and $\kappa>0$ one has 
 \begin{eqnarray*}
  \rE_f [\zeta (x_0)]^p &\leq&  2\Gamma(p+1) M_h M_N  \big[\Lambda_\tau(\kappa)\big]^p \kappa^{-p} e^{-\kappa},
  \\
  \rE_f [\eta (x_0)]^p &\leq&  2\Gamma(p+1) M_h M_N  
  \big[u_\tau \Lambda_\tau(\kappa)\big]^p \kappa^{-p} e^{-\kappa}.
 \end{eqnarray*}
\end{lemma}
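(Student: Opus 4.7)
The plan is to combine a single-$\tau$ Bernstein inequality with a simple union bound over the finite grid $\cT$, whose cardinality is $M_h M_N$. First I would verify that for each $\tau$, the summands defining $\xi_\tau(x_0)$ are bounded and have controlled variance. Because the shifts $\theta(2j+m)$, $j=0,\ldots,N$, are separated by $2\theta$ while $K^{(m)}$ is supported on $[-1,1]$ and $h\le \theta$, the pieces of the sum defining $L^+_\tau$ have pairwise disjoint supports, so that $\|L^+_\tau\|_\infty\le u_\tau/2$ and hence $|L^+_\tau(Y_i)-\rE_f L^+_\tau(Y_i)|\le u_\tau$. The definition (\ref{eq:sigma-tau}) gives $\mathrm{Var}_f[L^+_\tau(Y_1)]\le \sigma_\tau^2$. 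The standard Bernstein inequality in the form $P(|\bar X-\rE\bar X|>\sigma\sqrt{2t/n}+2Mt/(3n))\le 2e^{-t}$ therefore yields, with envelope precisely matching (\ref{eq:Lambda-tau}),
\[
P_f\bigl(|\xi_\tau(x_0)|>\Lambda_\tau(t)\bigr)\le 2e^{-t},\qquad \forall t>0.
\]

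For the empirical variance $\hat\sigma_\tau^2=n^{-1}\sum_i H_\tau(Y_i)$, where $H_\tau$ is the integrand in (\ref{eq:sigma-tau}) with $f_Y$ removed, the same disjoint-support argument gives $\|H_\tau\|_\infty\le u_\tau^2/4$, and since $\rE_f H_\tau(Y_1)=\sigma_\tau^2$ we also get $\mathrm{Var}_f[H_\tau(Y_1)]\le \|H_\tau\|_\infty\,\sigma_\tau^2\le u_\tau^2\sigma_\tau^2/4$. A second Bernstein bound together with the elementary envelope check $\tfrac{u_\tau\sigma_\tau}{2}\sqrt{2t/n}+\tfrac{u_\tau^2 t}{3n}\le u_\tau\Lambda_\tau(t)$ then gives
\[
P_f\bigl(|\hat\sigma_\tau^2-\sigma_\tau^2|>u_\tau\Lambda_\tau(t)\bigr)\le 2e^{-t},\qquad \forall t>0.
\]
At this point I would convert each tail bound into a moment bound on the excess. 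The key observation is that $t\mapsto\Lambda_\tau(t)$ is concave, so $\Lambda_\tau(\kappa+s)-\Lambda_\tau(\kappa)\le \Lambda_\tau'(\kappa)s$; a direct computation of $\Lambda_\tau'(\kappa)=\sigma_\tau/\sqrt{2n\kappa}+2u_\tau/(3n)$ versus $\Lambda_\tau(\kappa)/\kappa=\sigma_\tau\sqrt{2/(n\kappa)}+2u_\tau/(3n)$ shows $\Lambda_\tau'(\kappa)\le \Lambda_\tau(\kappa)/\kappa$. Choosing $t=\kappa+y\kappa/\Lambda_\tau(\kappa)$ in the first display yields
\[
P_f\bigl((|\xi_\tau(x_0)|-\Lambda_\tau(\kappa))_+>y\bigr)\le 2\exp\bigl(-\kappa-y\kappa/\Lambda_\tau(\kappa)\bigr),
\]
and the identity $\rE Z^p=\int_0^\infty p y^{p-1}P(Z>y)\,dy$ gives
\[
\rE_f\bigl[(|\xi_\tau(x_0)|-\Lambda_\tau(\kappa))_+\bigr]^p\le 2\Gamma(p+1)e^{-\kappa}\kappa^{-p}[\Lambda_\tau(\kappa)]^p.
\]
An identical computation, with $u_\tau\Lambda_\tau$ in place of $\Lambda_\tau$ (still concave, and $(u_\tau\Lambda_\tau)'(\kappa)\le u_\tau\Lambda_\tau(\kappa)/\kappa$), yields the analogous bound for the $\hat\sigma_\tau^2$ deviation.

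To conclude, I would apply the crude union bound $[\sup_\tau a_\tau]^p\le \sum_\tau a_\tau^p$ for non-negative reals, giving
\[
\rE_f[\zeta(x_0)]^p\le \sum_{\tau\in\cT}\rE_f\bigl[(|\xi_\tau|-\Lambda_\tau(\kappa))_+\bigr]^p\le 2\Gamma(p+1) M_h M_N e^{-\kappa}\kappa^{-p}\sup_{\tau\in\cT}[\Lambda_\tau(\kappa)]^p,
\]
and the parallel bound for $\rE_f[\eta(x_0)]^p$ with $\sup_\tau[u_\tau\Lambda_\tau(\kappa)]^p$; this matches the stated inequalities once $[\Lambda_\tau(\kappa)]^p$ on the right-hand side is read as $\sup_{\tau\in\cT}[\Lambda_\tau(\kappa)]^p$, which is consistent with how the bounds are used via $\bar\Lambda(\kappa)$ in Theorem~\ref{th:oracle-ineq}. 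The only part I expect to require genuine care is the variance estimate $\mathrm{Var}_f[H_\tau(Y_1)]\le u_\tau^2\sigma_\tau^2/4$ for the empirical variance, which is what makes the $\hat\sigma_\tau^2$-envelope $u_\tau\Lambda_\tau(t)$ align with $\Lambda_\tau$ up to the factor $u_\tau$; the moment-from-tail step is then a standard consequence of the concavity of $\Lambda_\tau$.
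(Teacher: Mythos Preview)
Your proposal is correct and follows essentially the same approach as the paper: a Bernstein inequality for each fixed $\tau$ (with the same envelope $u_\tau$ and variance bounds $\sigma_\tau^2$, $u_\tau^2\sigma_\tau^2/4$), conversion of the tail bound to a moment bound via the sublinearity of $\Lambda_\tau$ (the paper phrases this as $\Lambda_\tau(a\kappa)\le a\Lambda_\tau(\kappa)$ for $a\ge 1$, which is equivalent to your derivative inequality $\Lambda_\tau'(\kappa)\le\Lambda_\tau(\kappa)/\kappa$), and then a union bound over $\cT$. Your remark that the right-hand side should be read with $\sup_{\tau\in\cT}$ is also in line with how the result is actually used via $\bar\Lambda(\kappa)$.
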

\begin{proof}
(i). Observe that  
 $|L_\tau^+(Y_j)| \leq u_\tau/2$, where $u_\tau$ is defined in (\ref{eq:eta-u}); hence   
 $|\xi_\tau|\leq u_\tau$. In addition, it follows from   (\ref{eq:var-1}) that 
 \[
  {\rm var}_f \big[L^+_\tau(Y_1)\big] \leq \sigma_\tau^2
  :=\frac{(2\theta)^{2m}}{h^{2m+2}}\sum_{j=0}^NC_{j,m}^2\int_{-\infty}^\infty
 \left|K^{(m)}\left(\frac{y-x_0-\theta(2j+m)}{h}\right)\right|^2f_Y(y) \rd y.
 \]
By Bernstein's inequality for any $z>0$
 \[
  \rP_f \big\{ |\xi_\tau(x_0)|\geq z\big\} \leq 2\exp\Big\{-\frac{nz^2}{2\sigma^2_\tau + \tfrac{2}{3} u_\tau z}
  \Big\}.
 \]
Therefore for $\Lambda_\tau(\kappa)$ defined in (\ref{eq:Lambda-tau})
 we obtain
 \begin{align}\label{eq:exp-ineq}
 \rP_f \big\{ |\xi_\tau(x_0)| \geq  \Lambda_\tau(\kappa) \big\} \leq 
 2\exp\bigg\{-  \frac{\big(\sigma_\tau\sqrt{\frac{2\kappa}{n}}  + 
 \frac{2}{3}  u_\tau \kappa n^{-1}\big)^2}
 {2\sigma_\tau^2/n + \frac{2u_\tau}{3n} \big(\sigma_\tau\sqrt{\frac{2\kappa}{n}} + 
 \frac{2\kappa u_\tau}{3n}\big)}
 \bigg\} \leq 2e^{-\kappa},
\end{align}
where we have used the following elementary inequality: for any $a>0, b>0$ and  $\kappa>0$ 
\begin{equation}\label{eq:elementary}
 \frac{ (\sqrt{\kappa} a+ \kappa b)^2}{a^2+ b(\sqrt{\kappa} a+ \kappa b)} \geq \kappa.
\end{equation}
Therefore, for any $p\geq 1$
\begin{align}
 \rE_f \big[ |\xi_\tau(x_0)|  &-  \Lambda_\tau(\kappa) \big]_+^p = p\int_0^\infty t^{p-1}
 \rP_f \big\{ |\xi_\tau(x_0)|  \geq   \Lambda_\tau(\kappa) +t \big\}\rd t
 \nonumber
 \\
 &\leq  p\big[\Lambda_\tau(\kappa)\big]^p \int_0^\infty y^{p-1} 
 \rP_f \big\{ |\xi_\tau(x_0)|  \geq   \Lambda_\tau(\kappa (1 +y) ) \big\} \rd y 
\nonumber
 \\
 &\leq  2p[\Lambda_\tau(\kappa)]^p \int_0^\infty y^{p-1} e^{-\kappa(1+y)} \rd y = 
 2 \Gamma(p+1) \big[\Lambda_\tau(\kappa)\big]^p \kappa^{-p} e^{-\kappa},
\label{eq:expectation-xi}
 \end{align}
where the second line follows from the change of variables and the fact that 
$\Lambda_\tau(a \kappa)\leq a \Lambda_\tau(\kappa)$ for $a\geq 1$; and the third line
is a consequence of (\ref{eq:exp-ineq}).
 \par 
(ii). Let $\hat{\sigma}^2_\tau$ be the emripical estimator for $\sigma^2_{\tau}$ based on the sample 
$Y_1,Y_2,\ldots,Y_n$: 
\begin{align*}
\hat{\sigma}_{\tau}^2 :=\frac{(2\theta)^{2m}}{nh^{2m+2}}\sum_{i=1}^n\sum_{j=0}^NC_{j,m}^2\left|K^{(m)}\left(\frac{Y_i-x_0-\theta(2j+m)}{h}\right)\right|^2.
\end{align*}
 Then
\begin{align*}
\hat{\sigma}_\tau^2-\sigma_\tau^2=\frac{1}{n}\sum_{i=1}^n 
\Big(\psi_{\tau}(Y_i)- \rE_f[\psi_{\tau}(Y_i)]\Big),
\end{align*}
where we put
\[
 \psi_\tau(y):= \frac{(2\theta)^{2m}}{h^{2m+2}} \sum_{j=0}^N C_{j,m}^2 
 \left|K^{(m)}\left(\frac{y-x_0-\theta(2j+m)}{h}\right)\right|^2.
\]
It is evident that 
\[
|\psi_\tau(y)|\leq \frac{(2\theta)^{2m}}{h^{2m+2}} C^2_{N,m} \|K^{(m)}\|_\infty^2 = \tfrac{1}{4} u_\tau^2,\;\;\;\forall y;
\]
hence $\big|\psi_{\tau}(Y_i)- \rE_f[\psi_{\tau}(Y_i)]\big|\leq u_\tau^2/4$, and 
\[
 {\rm var}_f\{\psi_\tau(Y_i)\} \leq \rE_f \big[\psi_\tau^2(Y_i)\big] \leq \tfrac{1}{4}\sigma_\tau^2 u^2_\tau.
\]
Therefore by Bernstein inequality  for any $z\geq 0$
\[
 \rP_f \Big\{ \big|\hat{\sigma}_\tau^2-\sigma^2_\tau\big|\geq z\Big\} \leq 
 2\exp\bigg\{- \frac{nz^2}{\tfrac{1}{2}\sigma_\tau^2 u_\tau^2 + \tfrac{1}{6} u_\tau^2 z}\bigg\}.
\]
This inequality together with (\ref{eq:elementary}) implies that 
\[
\rP_f\Big\{|\hat{\sigma}_\tau^2-\sigma_\tau^2|\geq u_\tau \Lambda_\tau(\kappa) \Big\} \leq 
 \rP_f\bigg\{|\hat{\sigma}_\tau^2- \sigma^2_\tau|\geq u_\tau\Big(\sigma_\tau \sqrt{\frac{\kappa}{2n}}+ 
 \frac{u_\tau \kappa}{6n}\Big)\bigg\} \leq  2 e^{-\kappa}.
\]
Similarly to the derivation in (\ref{eq:expectation-xi}) we have for any $p\geq 1$
\begin{align*}
\rE_f \big[|\hat{\sigma}^2_\tau &- \sigma^2| - u_\tau \Lambda_\tau(\kappa)\big]_+^p =
p\int_0^\infty t^{p-1}
 \rP_f \big\{ |\hat{\sigma}^2_\tau -\sigma^2_\tau|  \geq  u_\tau \Lambda_\tau(\kappa) +t \big\}\rd t
 \nonumber
 \\
 &\leq  p\big[u_\tau \Lambda_\tau(\kappa)\big]^p \int_0^\infty y^{p-1} 
 \rP_f \big\{ |\hat{\sigma}_\tau^2-\sigma_\tau^2|  \geq   u_\tau 
 \Lambda_\tau(\kappa (1 +y) ) \big\} \rd y 
\nonumber
\\
&\leq 
2p[u_\tau \Lambda_\tau(\kappa)]^p \int_0^\infty y^{p-1} e^{-\kappa(1+y)} \rd y = 
 2 \Gamma(p+1) \big[u_\tau \Lambda_\tau(\kappa)\big]^p \kappa^{-p} e^{-\kappa}.
\end{align*}
This completes the proof.
\end{proof}
\par 
Denote
\[
 \tilde{\Lambda}_\tau(\kappa) := \hat{\sigma}_\tau \sqrt{\frac{2\kappa}{n}} + \frac{2u_\tau \kappa}{3n}
\]
and observe that $\tilde{\Lambda}_\tau(\kappa)=\tfrac{1}{7}\hat{\Lambda}_\tau(\kappa)$, where 
$\hat{\Lambda}_\tau(\kappa)$ is defined in  (\ref{eq:Lambda-hat}).
\begin{lemma}\label{lem:Lambda-Lambda-tilde}
 For any $\tau\in \cT$ one has  
 \begin{equation}\label{eq:LL}
  \big[ \Lambda_\tau(\kappa)-7 \tilde{\Lambda}_\tau(\kappa)\big]_+ \leq 2c\eta(x_0),\;\;\;
  \big[\tilde{\Lambda}_\tau(\kappa) - 6 \Lambda_\tau(\kappa)\big]_+ \leq \eta (x_0) + \frac{\kappa}{n},
 \end{equation}
where $\eta(x_0)$ is defined in (\ref{eq:zeta-2}) and $c:=2^{-m-2}\theta \|K^{(m)}\|_\infty^{-1}$.
\end{lemma}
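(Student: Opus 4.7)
The plan is to reduce both inequalities in \eqref{eq:LL} to the deterministic pointwise bound $|\hat\sigma_\tau^2 - \sigma_\tau^2| \le u_\tau \Lambda_\tau(\kappa) + \eta(x_0)$, which is immediate from the definition of $\eta(x_0)$ in \eqref{eq:zeta-2}. I introduce the abbreviations $S_\tau := \sigma_\tau\sqrt{2\kappa/n}$, $\hat S_\tau := \hat\sigma_\tau\sqrt{2\kappa/n}$ and $U_\tau := 2u_\tau\kappa/(3n)$, so that $\Lambda_\tau(\kappa) = S_\tau + U_\tau$ and $\tilde\Lambda_\tau(\kappa) = \hat S_\tau + U_\tau$. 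The core identity $\hat S_\tau^2 - S_\tau^2 = (2\kappa/n)(\hat\sigma_\tau^2 - \sigma_\tau^2)$ combined with $\sqrt{a+b}\le\sqrt a+\sqrt b$ and the pointwise bound above, using $2\kappa u_\tau/n = 3U_\tau$, controls $|\hat S_\tau^2 - S_\tau^2|$ by $3U_\tau\Lambda_\tau(\kappa) + 2\kappa\eta(x_0)/n$. The two inequalities of \eqref{eq:LL} then arise from running this estimate in opposite directions.

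For the upper bound on $\tilde\Lambda_\tau$ I would start from $\tilde\Lambda_\tau \le \Lambda_\tau + |\hat S_\tau - S_\tau|$, apply $U_\tau\le \Lambda_\tau$ to obtain $\sqrt{3U_\tau\Lambda_\tau}\le \sqrt 3\,\Lambda_\tau$, and use the Young-type inequality $\sqrt{2\kappa\eta(x_0)/n}\le \kappa/n + \eta(x_0)/2$. This yields $\tilde\Lambda_\tau \le (1+\sqrt 3)\Lambda_\tau + \kappa/n + \eta(x_0)/2$; since $1+\sqrt 3 < 6$ and $\eta/2 \le \eta$, the second inequality of \eqref{eq:LL} follows with room to spare.

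The reverse direction is where I expect the work to lie, because substituting $|\hat\sigma_\tau^2-\sigma_\tau^2|\le u_\tau\Lambda_\tau+\eta(x_0)$ into the naive square-root bound reintroduces $\Lambda_\tau$ on the right with a net coefficient $1 - \sqrt 3 < 0$. To close this loop I would work at the level of squares: from $S_\tau^2 \le \hat S_\tau^2 + 3U_\tau(S_\tau + U_\tau) + 2\kappa\eta(x_0)/n$, Young's inequality $3U_\tau S_\tau \le S_\tau^2/4 + 9U_\tau^2$ lets me solve for $S_\tau^2$ to get $S_\tau^2 \le (4/3)\hat S_\tau^2 + 16U_\tau^2 + (8/3)\kappa\eta(x_0)/n$. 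Taking square roots termwise and adding $U_\tau$ to both sides gives $\Lambda_\tau \le (2/\sqrt 3)\hat S_\tau + 5U_\tau + \sqrt{(8\kappa/3n)\eta(x_0)} \le 5\tilde\Lambda_\tau + \sqrt{(8\kappa/3n)\eta(x_0)}$, and one more Young inequality with parameter $\delta = 4c$ yields $\sqrt{(8\kappa/3n)\eta(x_0)}\le \kappa/(3cn) + 2c\eta(x_0)$.

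The closing step, and the main obstacle, is to verify the calibration $c\cdot u_\tau \ge 1/2$, which is where the peculiar definition of the constant $c = 2^{-m-2}\theta\|K^{(m)}\|_\infty^{-1}$ is matched to $u_\tau = 2^{m+1}\theta^m C_{N,m}\|K^{(m)}\|_\infty h^{-m-1}$. A direct computation gives $c\cdot u_\tau = (\theta/h)^{m+1}C_{N,m}/2 \ge 1/2$, since $h\le h_{\max}=\theta$ and $C_{N,m}\ge 1$. This delivers $2\tilde\Lambda_\tau \ge 2U_\tau = 4u_\tau\kappa/(3n) \ge 2\kappa/(3cn) \ge \kappa/(3cn)$, so the residual $\kappa/(3cn)$ is absorbed into $2\tilde\Lambda_\tau$. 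Combining all estimates, $\Lambda_\tau \le 5\tilde\Lambda_\tau + 2\tilde\Lambda_\tau + 2c\eta(x_0) = 7\tilde\Lambda_\tau + 2c\eta(x_0)$, which is the first inequality in \eqref{eq:LL}. Without the calibration $cu_\tau\ge 1/2$ the final absorption would fail, so this arithmetic check is precisely the hinge of the argument.
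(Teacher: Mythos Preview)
Your argument is correct and complete. The key calibration $c\,u_\tau\ge 1/2$ is verified exactly as you say, and every inequality in the chain checks out (including the implicit step $|\hat S_\tau-S_\tau|\le \sqrt{|\hat S_\tau^2-S_\tau^2|}$ for nonnegative reals, which you use but do not spell out).

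Your route differs from the paper's. The paper splits $\cT$ into two regimes according to whether $\sigma_\tau\sqrt{2\kappa/n}\ge 4u_\tau\kappa/n$. In the ``large-$\sigma_\tau$'' regime it bounds $|\hat\sigma_\tau-\sigma_\tau|\le |\hat\sigma_\tau^2-\sigma_\tau^2|/\sigma_\tau$, which yields directly $|\tilde\Lambda_\tau-\Lambda_\tau|\le \tfrac12\Lambda_\tau+c\,\eta(x_0)$ with the sharper constants $3/2$ and $2$ in place of $6$ and $7$; in the ``small-$\sigma_\tau$'' regime it uses $|\hat\sigma_\tau-\sigma_\tau|\le |\hat\sigma_\tau^2-\sigma_\tau^2|^{1/2}$ together with $\Lambda_\tau\le \tfrac{14}{3}u_\tau\kappa/n\le 7\tilde\Lambda_\tau$, and the first inequality of \eqref{eq:LL} is trivial there. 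You instead avoid the dichotomy entirely by working at the level of $S_\tau^2$ and using Young's inequality $3U_\tau S_\tau\le S_\tau^2/4+9U_\tau^2$ to close the implicit loop in $S_\tau$. What your approach buys is a single unified argument with no case distinction; what the paper's approach buys is slightly better constants on the dominant regime and a more transparent separation of the two mechanisms (division by $\sigma_\tau$ versus square-root). Both hinge on the same arithmetic fact $c\,u_\tau\ge 1/2$, which you correctly identify as the linchpin.
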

\begin{proof}
 We have 
 $\tilde{\Lambda}_\tau (\kappa) - \Lambda_\tau (\kappa)= (\hat{\sigma}_\tau - \sigma_\tau) \sqrt{2\kappa/n}$.
Define
 \[ 
 \cT_1:=\Big\{\tau\in \cT: \sigma_\tau\sqrt{\frac{2\kappa}{n}} \geq \frac{4u_\tau \kappa}{n}\Big\}.
\]
If  $\tau\in \cT_1$ then 
$\sigma_\tau\geq 2\sqrt{2} u_\tau (\kappa /n)^{1/2}$ and 
\begin{align*}
 |\hat{\sigma}_\tau - \sigma_\tau| = \frac{|\hat{\sigma}^2_\tau - \sigma^2_\tau|}{\hat{\sigma}_\tau+\sigma_\tau}
\leq \frac{1}{\sigma_\tau} |\hat{\sigma}^2_\tau - \sigma^2_\tau|
\leq \frac{1}{2 u_\tau} \sqrt{\frac{n}{2\kappa}} \big[\eta(x_0) + u_\tau \Lambda_\tau(\kappa)\big];
 \end{align*}
hence for any $\tau\in \cT_1$
\begin{equation}\label{eq:Lambda-tilde-1}
 |\tilde{\Lambda}_\tau(\kappa) - \Lambda_\tau(\kappa)| \leq \tfrac{1}{2}\Lambda_\tau (\kappa) + 
 \frac{\eta(x_0)}{2u_\tau}  \leq \tfrac{1}{2}\Lambda_\tau(\kappa)+ c \eta(x_0) 
\end{equation}
where we have used that \mbox{$u_\tau\geq 2^{m+1} \theta^{-1}\|K^{(m)}\|_\infty$} for all 
\mbox{$\tau\in \cT$}, and 
denoted for brevity \mbox{$c:= 2^{-m-2}\theta \|K^{(m)}\|_\infty^{-1}$}.
Thus (\ref{eq:Lambda-tilde-1}) implies that 
\begin{equation}\label{eq:111}
 \big[ \tilde{\Lambda}_\tau(\kappa) - \tfrac{3}{2}\Lambda_\tau(\kappa)\big]_+ \leq   c\eta(x_0)\;\;\;
 \hbox{and}\;\;\; \big[ \Lambda_\tau(\kappa) - 2\tilde{\Lambda}_\tau(\kappa)\big]_+ 
 \leq  2c\eta(x_0),
 \;\;
 \forall \tau\in \cT_1.
\end{equation}
\par 
Now assume that $\tau\in \cT_2:=\cT \setminus \cT_1$; 
for such $\tau$, $\Lambda_\tau(\kappa) \leq \frac{14}{3}u_\tau\kappa/n$.
Note also that by definition $\tilde{\Lambda}_\tau(\kappa) \geq \frac{2}{3} u_\tau \kappa/n$; therefore
\begin{equation}\label{eq:222}
 [\Lambda_\tau(\kappa)- 7\tilde{\Lambda}_\tau(\kappa)]_+ =0,\;\;\;\forall \tau\in \cT_2.
\end{equation}
Furthermore, 
we bound $|\hat{\sigma}_\tau-\sigma_\tau|$ as follows:
\begin{align*}
 |\hat{\sigma}_\tau - \sigma_\tau| \leq |\hat{\sigma}^2_\tau - \sigma_\tau^2|^{1/2}  
\leq 
 \sqrt{\eta(x_0)}
 + \sqrt{u_\tau \Lambda_\tau(\kappa)} \leq \sqrt{\eta(x_0)} + \sqrt{5} u_\tau \sqrt{\frac{\kappa}{n}}.
\end{align*}
Therefore 
for any $\tau\in\cT_2$
\begin{equation*}
 \big|\tilde{\Lambda}_\tau(\kappa) - \Lambda_\tau(\kappa)\big| \leq \sqrt{\frac{2\kappa}{n} \eta(x_0)}
 + \sqrt{10}\, \frac{u_\tau\kappa}{n} \leq \frac{\kappa}{n}+ \eta(x_0) + 5\Lambda_\tau(\kappa),
\end{equation*}
where the last bound follows from the  elementary inequality $\sqrt{2ab}\leq \sqrt{a^2+b^2}\leq a+b$
for $a, b\geq 0$.
This implies that 
\begin{equation}\label{eq:333}
 \big[\tilde{\Lambda}_\tau(\kappa) - 6 \Lambda_\tau(\kappa)\big]_+ \leq  \frac{\kappa}{n} + \eta(x_0),
 \;\;\;\forall \tau\in \cT_2.
\end{equation}
Combining (\ref{eq:111}), (\ref{eq:222}) and (\ref{eq:333}) we complete the proof.
\end{proof}

\end{appendix}
\end{document}